\documentclass[11pt]{amsart}

\usepackage{graphics}
\usepackage{color}
\usepackage[a4paper, margin=3cm]{geometry}

\usepackage{amssymb,enumerate}
\usepackage{amsmath}
\allowdisplaybreaks
\usepackage{bbm}           
\usepackage{bm}
\usepackage{eso-pic,graphicx}
\usepackage{tikz}
\usepackage{cite}
\usepackage{esint}
\usepackage[colorlinks=true, pdfstartview=FitV, linkcolor=blue, citecolor=blue, urlcolor=blue]{hyperref}

\usepackage{graphicx}

\usepackage{amsmath,amsfonts}
\usepackage{rotating}
\usepackage{amssymb}
\usepackage{verbatim}
\usepackage{rotating}
\usepackage{mathrsfs}
\DeclareSymbolFont{largesymbols}{OMX}{cmex}{m}{n}
\makeatletter
\def\Ddots{\mathinner{\mkern1mu\raise\p@
\vbox{\kern7\p@\hbox{.}}\mkern2mu
\raise4\p@\hbox{.}\mkern2mu\raise7\p@\hbox{.}\mkern1mu}}
\makeatother
\newtheorem*{thA}{Theorem A}
\newtheorem*{thB}{Theorem B}

\def\XXint#1#2#3{{\setbox0=\hbox{$#1{#2#3}{\int}$}
\vcenter{\hbox{$#2#3$}}\kern-.5\wd0}}

\begin{document}

\newtheorem{definition}{Definition}
\newtheorem{theorem}[definition]{Theorem}
\newtheorem{proposition}[definition]{Proposition}
\newtheorem{conjecture}[definition]{Conjecture}
\def\theconjecture{\unskip}
\newtheorem{corollary}[definition]{Corollary}
\newtheorem{lemma}[definition]{Lemma}
\newtheorem{claim}[definition]{Claim}
\newtheorem{sublemma}[definition]{Sublemma}
\newtheorem{observation}[definition]{Observation}
\theoremstyle{definition}

\newtheorem{notation}[definition]{Notation}
\newtheorem{remark}[definition]{Remark}
\newtheorem{question}[definition]{Question}

\newtheorem{example}[definition]{Example}
\newtheorem{problem}[definition]{Problem}
\newtheorem{exercise}[definition]{Exercise}
 \newtheorem{thm}{Theorem}
 \newtheorem{cor}[thm]{Corollary}
 \newtheorem{lem}{Lemma}[section]
 \newtheorem{prop}[thm]{Proposition}
 \theoremstyle{definition}
 \newtheorem{dfn}[thm]{Definition}
 \theoremstyle{remark}
 \newtheorem{rem}{Remark}
 \newtheorem{ex}{Example}
 
\def\C{\mathbb{C}}
\def\R{\mathbb{R}}
\def\Rn{{\mathbb{R}^n}}
\def\Rns{{\mathbb{R}^{n+1}}}
\def\Sn{{{S}^{n-1}}}
\def\M{\mathbb{M}}
\def\N{\mathbb{N}}
\def\Q{{\mathbb{Q}}}
\def\Z{\mathbb{Z}}
\def\X{\mathbb{X}}
\def\Y{\mathbb{Y}}
\def\F{\mathcal{F}}
\def\L{\mathcal{L}}
\def\S{\mathcal{S}}
\def\supp{\operatorname{supp}}
\def\essi{\operatornamewithlimits{ess\,inf}}
\def\esss{\operatornamewithlimits{ess\,sup}}

\numberwithin{equation}{section}
\numberwithin{thm}{section}
\numberwithin{definition}{section}

\def\earrow{{\mathbf e}}
\def\rarrow{{\mathbf r}}
\def\uarrow{{\mathbf u}}
\def\varrow{{\mathbf V}}
\def\tpar{T_{\rm par}}
\def\apar{A_{\rm par}}

\def\reals{{\mathbb R}}
\def\torus{{\mathbb T}}
\def\t{{\mathcal T}}
\def\heis{{\mathbb H}}
\def\integers{{\mathbb Z}}
\def\z{{\mathbb Z}}
\def\naturals{{\mathbb N}}
\def\complex{{\mathbb C}\/}
\def\distance{\operatorname{distance}\,}
\def\support{\operatorname{support}\,}
\def\dist{\operatorname{dist}\,}
\def\Span{\operatorname{span}\,}
\def\degree{\operatorname{degree}\,}
\def\kernel{\operatorname{kernel}\,}
\def\dim{\operatorname{dim}\,}
\def\codim{\operatorname{codim}}
\def\trace{\operatorname{trace\,}}
\def\Span{\operatorname{span}\,}
\def\dimension{\operatorname{dimension}\,}
\def\codimension{\operatorname{codimension}\,}
\def\nullspace{\scriptk}
\def\kernel{\operatorname{Ker}}
\def\ZZ{ {\mathbb Z} }
\def\p{\partial}
\def\rp{{ ^{-1} }}
\def\Re{\operatorname{Re\,} }
\def\Im{\operatorname{Im\,} }
\def\ov{\overline}
\def\eps{\varepsilon}
\def\lt{L^2}
\def\diver{\operatorname{div}}
\def\curl{\operatorname{curl}}
\def\etta{\eta}
\newcommand{\norm}[1]{ \|  #1 \|}
\def\expect{\mathbb E}
\def\bull{$\bullet$\ }

\def\blue{\color{blue}}
\def\red{\color{red}}

\def\xone{x_1}
\def\xtwo{x_2}
\def\xq{x_2+x_1^2}
\newcommand{\abr}[1]{ \langle  #1 \rangle}

\newcommand{\Norm}[1]{ \left\|  #1 \right\| }
\newcommand{\set}[1]{ \left\{ #1 \right\} }
\newcommand{\ifou}{\raisebox{-1ex}{$\check{}$}}
\def\one{\mathbf 1}
\def\whole{\mathbf V}
\newcommand{\modulo}[2]{[#1]_{#2}}
\def \essinf{\mathop{\rm essinf}}
\def\scriptf{{\mathcal F}}
\def\scriptg{{\mathcal G}}
\def\m{{\mathcal M}}
\def\scriptb{{\mathcal B}}
\def\scriptc{{\mathcal C}}
\def\scriptt{{\mathcal T}}
\def\scripti{{\mathcal I}}
\def\scripte{{\mathcal E}}
\def\V{{\mathcal V}}
\def\scriptw{{\mathcal W}}
\def\scriptu{{\mathcal U}}
\def\scriptS{{\mathcal S}}
\def\scripta{{\mathcal A}}
\def\scriptr{{\mathcal R}}
\def\scripto{{\mathcal O}}
\def\scripth{{\mathcal H}}
\def\scriptd{{\mathcal D}}
\def\scriptl{{\mathcal L}}
\def\scriptn{{\mathcal N}}
\def\scriptp{{\mathcal P}}
\def\scriptk{{\mathcal K}}
\def\frakv{{\mathfrak V}}
\def\v{{\mathcal V}}
\def\C{\mathbb{C}}
\def\D{\mathcal{D}}
\def\R{\mathbb{R}}
\def\Rn{{\mathbb{R}^n}}
\def\rn{{\mathbb{R}^n}}
\def\Rm{{\mathbb{R}^{2n}}}
\def\r2n{{\mathbb{R}^{2n}}}
\def\Sn{{{S}^{n-1}}}
\def\bbM{\mathbb{M}}
\def\N{\mathbb{N}}
\def\Q{{\mathcal{Q}}}
\def\Z{\mathbb{Z}}
\def\F{\mathcal{F}}
\def\L{\mathcal{L}}
\def\G{\mathscr{G}}
\def\ch{\operatorname{ch}}
\def\supp{\operatorname{supp}}
\def\dist{\operatorname{dist}}
\def\essi{\operatornamewithlimits{ess\,inf}}
\def\esss{\operatornamewithlimits{ess\,sup}}
\def\dis{\displaystyle}
\def\dsum{\displaystyle\sum}
\def\dint{\displaystyle\int}
\def\dfrac{\displaystyle\frac}
\def\dsup{\displaystyle\sup}
\def\dlim{\displaystyle\lim}
\def\bom{\Omega}
\def\om{\omega}

\author[B. Dan]{BinWei Dan}
\address{BinWei Dan:
	School of Mathematical Sciences \\
	Beijing Normal University \\
	Laboratory of Mathematics and Complex Systems \\
	Ministry of Education \\
	Beijing 100875 \\
	People's Republic of China}
\email{bwdan@mail.bnu.edu.cn}

\author[M. Qin]{Moyan Qin}
\address{Moyan Qin:
	School of Mathematical Sciences \\
	Beijing Normal University \\
	Laboratory of Mathematics and Complex Systems \\
	Ministry of Education \\
	Beijing 100875 \\
	People's Republic of China}
\email{myqin@bnu.edu.cn}

\author[Q. Xue]{Qingying Xue$^{*}$}
\address{Qingying Xue:
	School of Mathematical Sciences \\
	Beijing Normal University \\
	Laboratory of Mathematics and Complex Systems \\
	Ministry of Education \\
	Beijing 100875 \\
	People's Republic of China}
\email{qyxue@bnu.edu.cn}

\keywords{Bilinear singular integrals operators, bilinear maximal  singular integrals operators, $\mathcal{K}_a(\mathbb{S}^{1})$ function class, $L(\log L)^\alpha(\mathbb{S}^{1})$ function class.\\
	\indent{{\it {2020 Mathematics Subject Classification.}}} Primary 42B20, Secondary 42B35.}

\thanks{The authors were partly supported by the National Key R\&D Program of China (No. 2020YFA0712900) and NNSF of China (No. 12271041).
\thanks{$^{*}$ Corresponding author, e-mail address: qyxue@bnu.edu.cn}}

\date{\today}
\title[BILINEAR ROUGH SINGULAR INTEGRALS UNDER A FRACTIONAL GEOMETRIC CONDITION]
{\bf Bilinear rough singular integrals under a fractional geometric condition}

\begin{abstract}
	
		We establish the Banach-range boundedness of bilinear rough singular integral operators, together with their maximal and maximally truncated forms, under the fractional geometric condition on the mean-zero angular kernel
		\[
		\sup_{\xi \in \mathbb{S}^{1}}\int_{\mathbb{S}^{1}} \frac{|\Omega(\theta)|}{|\theta \cdot \xi|^{a}} \, d\sigma(\theta) < \infty, \qquad \frac12 < a < 1.
		\]
		This condition imposes integrability strictly weaker than the $L^q(\mathbb{S}^1) (q>1)$  constraints considered by Grafakos, He, Honz\'ik (Adv. Math., 2018), Dosidis and Slavíková (Math. Ann., 2024), while defining a class of functions that is neither contained in nor contains the classical Orlicz space $L(\log L)^\alpha(\mathbb{S}^1) $ ($\alpha>1$). Our proof avoids traditional wavelet decompositions of the multiplier, instead using local Fourier series expansions of the input functions.

\end{abstract}\maketitle

\section{\bf Introduction}

It was well-known that the theory of rough singular integrals originated  in the fundamental work of Calder\'on and Zygmund~\cite{calderon_existence_1952}, who proved that the linear operator 
$	\mathcal {T}_\Omega(f)(x) = \text{p.v.} \int_{\mathbb{R}^{n}} \frac{\Omega((x-y)/|x-y|)}{|x-y|^n} f( y)\, dy$
is bounded on $L^p(\mathbb{R}^n)$ for $1 < p < \infty$, provided that $\Omega \in L \log L(\mathbb{S}^{n-1})$ and satisfies a mean zero condition~\cite{calderon_singular_1956}. This result was later improved under weaker assumptions, such as $\Omega \in H^1(\mathbb{S}^{n-1})$, by Coifman and Weiss~\cite{coifman_extensions_1977} and Connett~\cite{connett_singular_1979}. Subsequently, the $L^p(\mathbb{R}^n)$ boundedness was reconsidered by Grafakos and Stefanov  \cite{grafakos1998lp} with  a logarithmic kernel condition of the form
\begin{equation}\label{grafakos_cond}
	\sup_{\xi \in \mathbb{S}^{n-1}} \int_{\mathbb{S}^{n-1}} |\Omega(\theta)|
	\left(\ln \frac{1}{|\theta \cdot \xi|}\right)^{1+\alpha} \, d\sigma(\theta) < \infty, \quad \alpha > 0.
\end{equation}
This condition has no including relationship with $ H^1(\mathbb{S}^{n-1})$. 
The weak-type $(1,1)$ boundedness of $\mathcal {T}_\Omega$ was established in low dimensions by Christ and Rubio de Francia~\cite{christ_weak_1988-1} for $n\le 5 $ and Hofmann~\cite{hofmann_weighted_1988} for $n=2$ , and extended to all dimensions by Seeger~\cite{seeger_singular_1996} and to more general settings by Tao~\cite{Tao1999}. 

Building upon these linear foundations, attention naturally shifted to the \emph{bilinear} analogues. The associated bilinear singular integral operator is defined by
\begin{equation}\label{T}
\mathcal	{T}_\Omega(f_1, f_2)(x) = \text{p.v.} \int_{\mathbb{R}^{2n}} \frac{\Omega((y_1, y_2)/|(y_1, y_2)|)}{|(y_1, y_2)|^2} f_1(x - y_1) f_2(x - y_2)\, dy.
\end{equation}
The boundedness of $\mathcal {T}_\Omega \colon L^{p_1}(\mathbb{R}^n) \times L^{p_2}(\mathbb{R}^n) \to L^p(\mathbb{R}^n)$ was first established by Coifman and Meyer~\cite{coifman_commutators_1975} for smooth kernels when the indexs  $(p_1,p_2,p)\in\mathcal{H}^1$, where 
\[\mathcal{H}^q=\left\{(p_1,p_2,p):\;1<p_1,p_2<\infty,\;\frac{1}{2}<p<\infty,\;\frac{1}{p_1}+\frac{1}{p_2}=\frac{1}{p},\;\frac{1}{p}+\frac{1}{q}<2\right\},1\le q\le \infty.\]Subsequent extensions to broader exponent ranges, including the quasi-Banach regime $p < 1$, were obtained by Kenig and Stein~\cite{Kenig1999} and Grafakos and Torres~\cite{Grafakos2002}. 

For kernels where $\Omega$ lacks smoothness, bilinear bounds were initially obtained in one dimension in \cite{diestel2011method}, and later generalized to higher dimensions under various integrability conditions~in \cite{grafakos2018rough, grafakos2020l2, he2023improved}. In particular, Grafakos, He, Honz\'ik \cite{grafakos2018rough} 
showed that $\mathcal {T}_\Omega$ is bounded from  $L^{p_1}(\mathbb{R}^n) \times L^{p_2}(\mathbb{R}^n) $ to $ L^p(\mathbb{R}^n)$ if $\Omega\in L^\infty(\mathbb{S}^{2n-1})$. This result was later extended to $\Omega\in L^q(\mathbb{S}^{2d-1})$ for all $q>\frac{4}{3}$ in \cite{grafakos2020l2}.
In \cite{GHS2019}, the authors showed that $\mathcal{H}^q$ is the largest open range for which the operator $\mathcal {T}_\Omega$ is bounded for $\Omega\in L^q (1<q\leq\infty)$.
Recently, Dosidis and Slavíková~\cite{dosidis2024multilinear} completely characterized the optimal exponent range for the $L^{p_1}(\mathbb{R}^n) \times L^{p_2}(\mathbb{R}^n) \to L^p(\mathbb{R}^n)$  boundedness of $\mathcal {T}_\Omega$ if $\Omega \in L^q(\mathbb{S}^{2n-1})(q>1)$ for $(p_1,p_2,p)\in\mathcal{H}^q(1<q\le \infty)$.
A further breakthrough near the critical integrability endpoint was obtained by Dosidis, Park, and Slavíková~\cite{dosidis2026bilinear}. In their work, the angular kernel $\Omega$ is not assumed to belong to $L^q(\mathbb{S}^{2n-1})$ for any $q>1$; instead, it satisfies the Orlicz-type condition
\begin{equation}\label{E:omega}
\int_{\mathbb{S}^{2n-1}} |\Omega(\theta)| \bigl(\log(e + |\Omega(\theta)|)\bigr)^\alpha \, d\nu(\theta) < \infty, \qquad \text{for some } \alpha \ge 0.
\end{equation}
We then write $\Omega \in L(\log L)^\alpha(\mathbb{S}^{2n-1})$ whenever condition~\eqref{E:omega} holds. 

Still more recently, Bhojak and Shrivastava~\cite{bhojak2025alternate} obtained the following results, by utilizing local Fourier series expansions rather than traditional wavelet decompositions.  \begin{thA}\label{Thm:RoughBanach}
	Let $1\leq p_1,p_2,p<\infty$ with $\frac{1}{p_1}+\frac{1}{p_2}=\frac{1}{p}$. Let $A=A(p_1,p_2,p)$ be defined as
	\[\alpha=1+\max\left\{\frac{1}{p_1},\frac{1}{p_2},\frac{1}{p'}\right\}.\]
	Then, for all $\Omega\in L(LogL)^{\alpha}(\mathbb{S}^{1})$ with $\int_{\mathbb{S}^{1}}\Omega(\theta) d\sigma(\theta)=0$, it holds that
	\[\|\mathcal {T}_\Omega(f_1,f_2)\|_{L^{p}(\R)}\lesssim\|\Omega\|_{L(LogL)^{\alpha}(\mathbb{S}^{1})}\|f_1\|_{L^{p_1}(\R)}\|f_2\|_{L^{p_2}(\R)}.\]
\end{thA}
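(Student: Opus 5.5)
We follow the strategy of Bhojak and Shrivastava, with local Fourier series expansions of the inputs in place of wavelet decompositions of the multiplier.

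\emph{Reductions.} By homogeneity we normalize $\|\Omega\|_{L(\log L)^{\alpha}(\mathbb{S}^1)}=1$. We decompose $\Omega=\sum_{m\ge0}\Omega_m$, where $\Omega_0=\Omega\chi_{\{|\Omega|\le 1\}}$ and $\Omega_m=\Omega\chi_{\{2^{m-1}<|\Omega|\le 2^m\}}$, and subtract averages so that each piece keeps mean zero. Then $\|\Omega_m\|_{L^1}\lesssim 2^{m}|E_m|$, $\|\Omega_m\|_{L^\infty}\le 2^m$, and the Orlicz condition gives $\sum_m m^{\alpha}\|\Omega_m\|_{L^1}\lesssim 1$. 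Next we split the kernel dyadically in the radius, writing $K=\sum_{j}K_j$ with $K_j(y)=\Omega(y/|y|)|y|^{-2}\phi(2^{-j}|y|)$. Thus $\mathcal T_\Omega=\sum_j T_j$, and after rescaling each $T_j$ is an operator with kernel supported in the annulus $|y|\sim1$.

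\emph{Key estimates for each piece $\Omega_m$.} We aim for two bounds.

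(a) An $L^1$-type bound that works at every Banach point. Since $|T_j(f_1,f_2)|\le \int |K_j(y)||f_1(x-y_1)||f_2(x-y_2)|\,dy$, each individual piece is controlled by $\|\Omega_m\|_{L^1}$ times bilinear averages. Summing over $j$ costs a loss, however, so we use smoothing.

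(b) A decay estimate. We localize $f_1,f_2$ to intervals of length $2^j$, expand them in local Fourier series on those intervals, and write $T_j$ applied to the exponential pairs $e^{2\pi i(k_1 y_1+k_2y_2)2^{-j}}$. This produces the Fourier coefficients $\widehat{K_0}(k_1,k_2)$ of the rough annular kernel. Their decay in $|k|$ is governed by $\int|\Omega(\theta)|\,|\theta\cdot \xi|^{-\epsilon}$-type bounds, which for bounded $\Omega_m$ give $|\widehat{K_0}(k)|\lesssim 2^{m\epsilon'}|k|^{-\epsilon}$. Complementing this with a Littlewood--Paley decomposition $f_i=\sum_l\Delta_l f_i$, we obtain for $T_j(\Delta_{l_1}f_1,\Delta_{l_2}f_2)$ the $L^{p_1}\times L^{p_2}\to L^p$ norm bound
\[
\min\bigl(\|\Omega_m\|_{L^1},\ 2^{m}2^{-\delta \max(l_1,l_2)+j\text{-dependent}}\bigr),
\]
with geometric decay away from the diagonal $\max(l_1,l_2)\approx -j$.

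\emph{Summation.} For fixed $m$, the frequency levels with $|\max(l_i)+j|\le Cm$ are handled by the $L^1$ bound. Each such level costs $\|\Omega_m\|_{L^1}$ times a square-function or Carleson-type bound for local Fourier modes, and there are about $m$ levels. Counting these together with the interpolation to the Banach range, the total loss is $m^{1+\max(1/p_1,1/p_2,1/p')}\|\Omega_m\|_{L^1}$. The exponent $\max(1/p_1,1/p_2,1/p')$ arises from interpolating between $\ell^1$ and $\ell^2$ summation in each of the three dual positions; here we use the adjoints $T^{*1},T^{*2}$ to treat the $1/p'$ case. The remaining levels sum geometrically with loss $2^{m}2^{-\delta Cm}\lesssim\|\Omega_m\|_{L^1}$ when $C$ is large. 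Summing over $m$ then uses $\sum_m m^{\alpha}\|\Omega_m\|_{L^1}\lesssim1$.

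\emph{Main obstacle.} The hard step is obtaining exactly the power $m^{\alpha}$ with $\alpha=1+\max\{1/p_1,1/p_2,1/p'\}$ in the near-diagonal sum. Counting alone gives $m^2$. To improve it, we will first try to exploit orthogonality of the local Fourier coefficients, via a Rademacher--Menshov or Khintchine argument, to reduce the count of $m$ diagonal terms to $m^{\max(1/p_i,1/p')}$. Checking that this works uniformly at the endpoints $p_i=1$ will be the delicate part.
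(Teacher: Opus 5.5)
Your outline has the right architecture: level pieces $\Omega_m$ with $\|\Omega_m\|_{L^\infty}\lesssim 2^m$, dyadic kernel pieces, local Fourier expansions for the high-frequency decay, and a split of the levels at $k\approx Cm$. But the step that actually produces $\alpha$ is not supplied, and your ``main obstacle'' looks for it in the wrong place. The paper only quotes this theorem from Bhojak--Shrivastava; the mechanism is the one reproduced in Section~\ref{sec2}.

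What is needed is a per-level bound for each of the three families in \eqref{decomp_main}, namely \eqref{eqHH-growth}--\eqref{eqLH-growth} with $\|\Omega\|_{\mathcal K_a}$ replaced by $\|\Omega_m\|_{L^1(\mathbb S^1)}$:
\[
\|T^{I}_{\Omega_m,k}(f_1,f_2)\|_{L^p}\lesssim(1+k)^{C_I}\|\Omega_m\|_{L^1(\mathbb S^1)}\|f_1\|_{L^{p_1}}\|f_2\|_{L^{p_2}},\qquad I\in\{HH,HL,LH\},
\]
with $C_{HH}=|\frac1{p_1}-\frac12|+|\frac1{p_2}-\frac12|$, $C_{HL}=|\frac1{p_1}-\frac12|+|\frac1{p'}-\frac12|$ and $C_{LH}=|\frac1{p'}-\frac12|+|\frac1{p_2}-\frac12|$.

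Each exponent vanishes at the orthogonality point of its family. For $HH$ this is $L^2\times L^2\to L^1$, by Cauchy--Schwarz in the scale index and $\|K^i\|_{L^1}\lesssim\|\Omega_m\|_{L^1}$. For $HL$, resp.\ $LH$, it is $L^2\times L^\infty\to L^2$, resp.\ $L^\infty\times L^2\to L^2$, where the output is frequency-localized to an annulus and Littlewood--Paley applies. Each exponent rises to $1$ at the vertices of the Banach triangle.

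Write $x=\frac1{p_1}$, $y=\frac1{p_2}$, $z=\frac1{p'}$, so $x+y+z=1$. Then $\max_I C_I=\max\{x,y,z\}$. If $x,y,z\le\frac12$, then $(C_{HH},C_{HL},C_{LH})=(z,y,x)$. If, say, $x>\frac12$, then $C_{LH}=x$, $C_{HH}=x-y$, $C_{HL}=x-z$, and similarly when $y$ or $z$ exceeds $\frac12$.

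The sum over levels is then a plain sum, $\sum_{k\le Cm}(1+k)^{C_I}\lesssim m^{1+C_I}\le m^{\alpha}$. No orthogonality between levels (Rademacher--Menshov, Khintchine) enters. Your ``$m^2$'' is just the vertex growth $(1+k)^1$ used everywhere.

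Your remark about interpolating $\ell^1$ and $\ell^2$ summation points in this direction, but the estimate must be proved family by family. Grouping only by $\max(l_1,l_2)+j$ merges $HH$ with $HL$/$LH$, and it is their different orthogonality points that produce the three entries of the max. Read literally, your summation paragraph (each level costs $\|\Omega_m\|_{L^1}$ times a uniform square-function constant) would give $\sum_m m\|\Omega_m\|_{L^1}$, an $L\log L$ statement stronger than the theorem. The $k$-dependence of the per-level constant is exactly the missing content. Also, $p_i=1$ cannot occur: $p\ge1$ together with $p_1,p_2<\infty$ forces $p_1,p_2>1$, so the only endpoint in the statement is $p=1$.

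The decay step (b) also needs correcting. Because of the spatial cutoffs, the local expansion gives $\sum_{\vec k}a_{1,k_1}a_{2,k_2}a_{3,k_3}I_{\vec k}$ with three free indices, not $\widehat{K_0}(k_1,k_2)$ paired with $k_3=k_1+k_2$. Integration by parts in $x$ and $r$ reduces matters to $|k_1+k_2-k_3|\le\lambda^{\varepsilon_2}$ and $|(k_1,k_2)\cdot\theta|\le\lambda^{\varepsilon_2}$. Summing the remaining coefficients, which are controlled only in $\ell^2$, costs $\lambda^{\varepsilon_2+(1+\varepsilon_1)/2}$, as in \eqref{eqS2minus-sum-bound}. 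So in this argument a pointwise gain $|k|^{-\epsilon}$ with small $\epsilon$ does not close; the angular set must gain more than $\lambda^{-1/2}$.

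For the bounded pieces it does: $\sup_\omega\int_{|\omega\cdot\theta|\le\delta}|\Omega_m|\,d\sigma\lesssim\delta\,\|\Omega_m\|_{L^\infty}\lesssim\delta\,2^m$, which is \eqref{eqsmallcap} with $a=1$. This yields the anchor-point bounds \eqref{eqHH-decay-221}--\eqref{eqLH-decay-inf22} with $2^{-ck}2^m$ in place of $2^{-ck}\|\Omega\|_{\mathcal K_a}$. Interpolating these with the growth bounds and summing over $k>Cm$ gives $\lesssim 2^{-c'm}\|\Omega_m\|_{L^1}^{1-t}$ for some $t\in(0,1)$, which is summable in $m$ after your normalization. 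With these two ingredients your summation scheme closes.
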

\begin{thB}
	Let $1<q\leq\infty$ and $(p_1,p_2,p)\in\mathcal{H}^q$. Then, for all $\Omega\in L^q(\mathbb{S}^{1})$ with $\int_{\mathbb{S}^{1}}\Omega(\theta) d\sigma(\theta)=0$, it holds that
\[\|\mathcal {T}_\Omega(f_1,f_2)\|_{L^p(\R)}\lesssim\|\Omega\|_{L^q(\mathbb{S}^{1})}\|f_1\|_{L^{p_1}(\R)}\|f_2\|_{L^{p_2}(\R)}.\]
\end{thB}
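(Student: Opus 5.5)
The statement just above is Theorem B, the $L^q(\mathbb{S}^1)$ result of Bhojak and Shrivastava. I would prove it with the local Fourier series method suggested by Theorem A, rather than a wavelet decomposition of the multiplier.

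\emph{Dyadic decomposition.} Write $\mathcal{T}_\Omega=\sum_{j\in\mathbb{Z}}\mathcal{T}_j$, where $\mathcal{T}_j$ has kernel
\[
K_j(y)=\Omega(y/|y|)\,|y|^{-2}\,\chi_{\{2^j\le |y|<2^{j+1}\}}(y),\qquad y\in\mathbb{R}^2 .
\]
Fix $j$ and partition $\mathbb{R}$ into intervals $I$ of length $2^{j}$. For $x\in I$, only the restrictions of $f_1,f_2$ to the dilate $3I$ enter $\mathcal{T}_j(f_1,f_2)(x)$. On $3I$ I expand each $f_i$ in a local Fourier series, giving
\[
f_i\,\chi_{3I}=\sum_{k\in\mathbb{Z}} \widehat{f_i}_I(k)\,e^{2\pi i k x/(3\cdot 2^j)}\chi_{3I}.
\]
Substituting both expansions, $\mathcal{T}_j(f_1,f_2)$ on $I$ becomes a double sum over $(k_1,k_2)$. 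Each term is the product of the coefficients $\widehat{f_1}_I(k_1)\widehat{f_2}_I(k_2)$ with a modulated truncated integral of $\Omega$. Up to smooth cutoffs, that integral is essentially the Fourier transform of $K_j$ at the point $2^{-j}(k_1,k_2)$.

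\emph{Decay and the mean-zero case.} Polar coordinates reduce the kernel factor to
\[
\int_{\mathbb{S}^1}\Omega(\theta)\,m\bigl((k_1,k_2)\cdot\theta\bigr)\,d\sigma(\theta),
\]
where $m$ is the one-dimensional Fourier transform of a bump supported in $[1,2]$. I split into two regimes.

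When $|k|$ is large, Hölder gives
\[
\Bigl|\int_{\mathbb{S}^1}\Omega(\theta)\,m(k\cdot\theta)\,d\sigma(\theta)\Bigr|\lesssim \|\Omega\|_{L^q}\,|k|^{-1/q'} .
\]
This uses $|m(t)|\lesssim (1+|t|)^{-N}$ together with the fact that the set of $\theta$ with $|k\cdot\theta|\le 1$ has measure $\sim |k|^{-1}$.

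When $k=0$, the term vanishes by the mean-zero condition. The smallest frequencies, with $|k|\lesssim 1$, give the main term. I handle it by comparing $f_i$ with averages: the resulting error is controlled by a smooth bilinear Calderón–Zygmund piece, handled by Coifman–Meyer, plus a Littlewood–Paley square function.

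\emph{Summation and the main obstacle.} It remains to sum over $j$, $I$ and $(k_1,k_2)$. For fixed $(k_1,k_2)$, the contribution summed over $j$ and $I$ is a bilinear operator whose pieces are bounded by $\Omega$-coefficients times localized modulated averages of $f_1$ and $f_2$. Hölder in $x$, the $\ell^2$ Plancherel bound on the coefficients $\widehat{f_i}_I$, and Hausdorff–Young for local Fourier coefficients ($\ell^{p'}$ bounds for $L^p$ inputs) then give $L^{p_1}\times L^{p_2}\to L^p$ estimates. These carry polynomial growth $|k|^{\epsilon(p_1,p_2)}$ coming from the modulations. Summability requires the decay $|k|^{-1/q'}$ to beat this growth together with the two-dimensional counting of $(k_1,k_2)$. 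I expect exactly this to be possible when $\frac1p+\frac1q<2$, i.e. on $\mathcal{H}^q$.

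The main obstacle is the bookkeeping in this last step. The naive $\ell^{p'}$ coefficient bounds lose too much, so I would first obtain the endpoint estimates in the Banach range. These are $L^2\times L^2\to L^1$ through Plancherel on the coefficients, and $L^{p_1}\times L^{p_2}\to L^p$ near $p_i=\infty$. I would then interpolate in $(p_1,p_2,q)$, using multilinear complex interpolation in $\Omega$ as well, to reach the full range. The quasi-Banach part $p<1$ likely needs an additional $\ell^p$ summation over the intervals $I$ with $r$-convexity, $r=\min(1,p)$.
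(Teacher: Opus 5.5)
\paragraph{Verdict.} The paper does not prove Theorem B; it quotes it from \cite{bhojak2025alternate}. The only in-paper benchmark is the local-Fourier-series scheme of Section~\ref{sec2}. Measured against that, your sketch has a genuine gap at its central step, the summation over $(k_1,k_2)$.

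\paragraph{Why the summation over $(k_1,k_2)$ fails.} Summing operator norms of the fixed-$k$ pieces over $\mathbb{Z}^2$ with the bound $\|\Omega\|_{L^q}|k|^{-1/q'}$ cannot converge for any $q$, even if the pieces had no growth at all. The series $\sum_{k\in\mathbb{Z}^2\setminus\{0\}}|k|^{-s}$ needs $s>2$, while $1/q'\le 1$. This is structural, not bookkeeping. If the lattice values of $\widehat{K_0}$ produced by your expansion were absolutely summable, the periodization of $K_0$ would have an absolutely convergent Fourier series and so would be continuous. That fails for every $\Omega$ not a.e.\ equal to a continuous function, which is the whole point of rough kernels. (Minor: the dilate must be $5I$, not $3I$, since $|y_i|<2^{j+1}$.) Plancherel on the coefficients does not rescue the claimed $L^2\times L^2\to L^1$ endpoint either, because $\ell^2$ control plus the pointwise decay is not enough. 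With $\widehat{f_i}_I(k)=N^{-1/2}\chi_{[1,N]}(k)$ (unit $\ell^2$ norm) one gets $\sum_{k_1,k_2}|\widehat{f_1}_I(k_1)||\widehat{f_2}_I(k_2)||k|^{-1/q'}\sim N^{1/q}\to\infty$.

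\paragraph{What is missing.} Your plan lacks the structure used in Section~\ref{sec2}:
\begin{itemize}
\item Littlewood--Paley localization of the inputs relative to each kernel scale. This makes the local Fourier coefficients essentially supported on $|k_l|\sim\lambda=2^k$ with rapidly decaying tails, which truncates the lattice sum.
\item Expansion of the dual function $f_3$ as well, so that the $x$-integration forces $k_3\approx k_1+k_2$.
\item A single-scale decay $\lambda^{-c}$ at one $L^2$-based exponent for each of the $HH$, $HL$, $LH$ families, with only $(1+k)^{C}$ growth elsewhere in the Banach range.
\item Interpolation between these, followed by summation of $2^{-ck}$.
\end{itemize}

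\paragraph{The restriction to $q>2$.} Even this streamlined argument loses $\lambda^{1/2+}$ through Cauchy--Schwarz over $|k_1|\lesssim\lambda^{1+\varepsilon_1}$. That loss is exactly why Lemma~\ref{lemlocal_oscillatory} needs $a>1/2$. For $\Omega\in L^q$, the cap bound $\int_{|\omega\cdot\theta|\le\delta}|\Omega|\,d\sigma\lesssim\|\Omega\|_{L^q}\delta^{1/q'}$ yields the exponent $\frac12-\frac1{q'}+O(\varepsilon)$, which is negative only for $q>2$. So the range $1<q\le 2$ requires a further ingredient that neither your plan nor Section~\ref{sec2} contains.

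\paragraph{The role of $\frac1p+\frac1q<2$.} Your explanation of where this condition enters is off. For $p\ge 1$ it holds automatically, so it cannot be the threshold of any Banach-range summation. It only restricts the quasi-Banach range $p<1$. There your plan (an unspecified $r$-convex summation over the intervals $I$) supplies no estimate.
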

However, the boundedness of bilinear singular integrals under the logarithmic directional integrability condition \eqref{grafakos_cond} of Grafakos and Stefanov \textbf{remains an open problem} in Calder\'on-Zygmund theory. In this paper, we make a big step and consider a slightly stronger, power-law type geometric singularity. In the one-dimensional setting, our main results assume the standard vanishing mean condition
\begin{equation}\label{cancellation}
\int_{\mathbb{S}^{1}} \Omega(\theta) \, d\sigma(\theta) = 0,
\end{equation}
and the fractional directional condition
\begin{equation}\label{alpha}
\|\Omega\|_{\mathcal{K}_a} := \sup_{\xi \in \mathbb{S}^{1}} \int_{\mathbb{S}^{1}} \frac{|\Omega(\theta)|}{|\theta \cdot \xi|^{a}} \, d\sigma(\theta) < \infty,\qquad 0<a<1,
\end{equation}
where $\mathcal{K}_a(\mathbb{S}^{1})$ denotes the space of measurable functions satisfying~\eqref{alpha}.

To clarify the relationship between our framework and previous works, let $GS_\alpha(\mathbb{S}^{1})$ denote the class of kernels satisfying Condition \eqref{grafakos_cond}.  Then, the following including relationships hold
\[
L^q(\mathbb{S}^1) (q>1/(1-a))\subsetneq
\mathcal{K}_a(\mathbb{S}^{1})\subsetneq GS_\alpha(\mathbb{S}^{1})\, \quad
L^q(\mathbb{S}^1) (q>1)\subsetneq L(\log L)^\alpha(\mathbb{S}^1).
\]
Moreover, our fractional conditiopn is strictly incomparable with this logarithmic scale
\[
\mathcal{K}_a(\mathbb{S}^1) \not\subset L(\log L)^\alpha(\mathbb{S}^1) \quad \text{and} \quad L(\log L)^\alpha(\mathbb{S}^1) \not\subset \mathcal{K}_a(\mathbb{S}^1).
\]This will be demonstrated in Theorem \ref{thmincomparability} below.

We now state our main theorem as follows.

\begin{theorem}\label{thmmain}
	Let $1 < p_1, p_2, p < \infty$ with $1/p = 1/p_1 + 1/p_2$.
	Suppose that $\Omega$ satisfies condition~\eqref{cancellation} and	condition~\eqref{alpha} for $1/2 < a < 1$.
	Then there exists a constant $C > 0$ such that
	\[
	\|\mathcal {T}_\Omega(f_1, f_2)\|_{L^p(\mathbb{R})} \leq C\|\Omega\|_{\mathcal{K}_a}\|f_1\|_{L^{p_1}(\mathbb{R})}\|f_2\|_{L^{p_2}(\mathbb{R})}.
	\]
\end{theorem}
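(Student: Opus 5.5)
We follow the strategy of Bhojak and Shrivastava, replacing $L^q$ or Orlicz integrability of $\Omega$ by the directional bound \eqref{alpha}. On the Fourier side, $\mathcal T_\Omega$ has multiplier $m(\xi_1,\xi_2)=\widehat{K}(\xi)$ with $K(y)=\Omega(y/|y|)|y|^{-2}$, $y\in\R^2$. We make a Littlewood--Paley decomposition of the kernel, $K=\sum_{j\in\Z}K_j$ with $K_j(y)=K(y)\phi(2^{-j}|y|)$, and set $m_j=\widehat{K_j}$, so that $m_j(\xi)=m_0(2^j\xi)$. The whole proof rests on two estimates for $m_0$. By \eqref{cancellation}, $|m_0(\xi)|\lesssim\|\Omega\|_{L^1}|\xi|$ for small $\xi$. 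For large $\xi$, we write $m_0(\xi)=\int_{\mathbb S^1}\Omega(\theta)\widehat{\phi}_{\mathrm{rad}}(\theta\cdot\xi)\,d\sigma(\theta)$ with $\widehat\phi_{\mathrm{rad}}(t)=\int\phi(r)e^{-irt}\,dr/r$, which is Schwartz in $t$. Then
\[
|m_0(\xi)|\lesssim\int|\Omega(\theta)|\,\min\bigl(1,|\xi|^{-a}|\theta\cdot\xi'|^{-a}\bigr)\,d\sigma(\theta)\le|\xi|^{-a}\|\Omega\|_{\mathcal K_a},\qquad \xi'=\xi/|\xi|.
\]
The same bound holds for derivatives of $m_0$ up to a fixed order, since they have the same structure with extra polynomial factors in $\theta$. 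So the decay exponent $a$ replaces the $L^q$-based decay.

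Next we split $\mathcal T_\Omega=\sum_{k\ge0}T_k$. Here $T_0$ collects the pieces where $|2^j\xi|\lesssim1$ together with a smooth Coifman--Meyer-type remainder. For $k\ge1$, $T_k=\sum_j T_{m_j\,\widehat\psi(2^{j-k}\cdot)}$ isolates frequencies $|\xi|\sim2^{k-j}$. By the derivative bounds, $T_0$ has a symbol satisfying Coifman--Meyer estimates with constant $\lesssim\|\Omega\|_{\mathcal K_a}$, so it is bounded on the full Banach range. Since $\|\Omega\|_{L^1}\lesssim\|\Omega\|_{\mathcal K_a}$, every constant below is controlled by $\|\Omega\|_{\mathcal K_a}$.

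For each $T_k$ with $k\ge1$ we do not wavelet-decompose the multiplier. Instead we localize the input functions. After rescaling, $T_k$ acts on $(f_1,f_2)$ with the multiplier $m_0\widehat\psi(2^{-k}\cdot)$, supported in the annulus $|\xi|\sim2^k$. On this annulus we tile frequency space by cubes of side $\sim1$, on which $m_0$ is essentially constant (its derivatives are bounded). We expand the multiplier in a local Fourier series on each tile, or equivalently expand the frequency-localized pieces of $f_1,f_2$ in local Fourier series. This writes $T_k$ as a superposition, with rapidly decaying coefficients, of operators of the form $\sum_{Q_1,Q_2}c_{Q_1,Q_2}\,(\Delta_{Q_1}f_1)(\Delta_{Q_2}f_2)$, where $|c_{Q_1,Q_2}|\lesssim\sup_{Q_1\times Q_2}|m_0|$.

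We then prove two estimates for $T_k$.
\begin{enumerate}
\item An $L^2\times L^2\to L^1$ bound with decay. Using Cauchy--Schwarz and orthogonality of $\Delta_Q$, this reduces to $\sup_{\xi_1}\bigl(\sum_{\xi_2\text{-tiles}}|m_0(\xi_1,\xi_2)|^2\bigr)^{1/2}$, a one-dimensional fibre $\ell^2$-sum. By the pointwise bound $|m_0(\xi)|\lesssim\int|\Omega(\theta)|\min(1,|\theta\cdot\xi|^{-a})\,d\sigma(\theta)$ and Minkowski's inequality, this is at most $\int|\Omega(\theta)|\bigl(\sum_{|\ell|\lesssim2^k}\min(1,|\theta_2\ell+c|^{-2a})\bigr)^{1/2}d\sigma(\theta)$. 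Because $a>1/2$, the inner sum converges uniformly when $|\theta_2|$ is not too small, giving $\lesssim|\theta_2|^{-1/2}$. Interpolating this with the trivial bound $2^{k/2}$ and using \eqref{alpha} in the direction $\xi=e_2$ then yields $\|T_k\|_{L^2\times L^2\to L^1}\lesssim2^{-\epsilon k}\|\Omega\|_{\mathcal K_a}$ for some $\epsilon=\epsilon(a)>0$. \textbf{This step is where $a>1/2$ is essential, and it is the main obstacle}: one has to balance the fibre sum against the directional weight so that the power of $|\theta_2|$ stays below $a$ while a negative power of $2^k$ survives. Our first attempt is to split $\theta$ according to whether $|\theta_2|\gtrless2^{-\delta k}$.
\item A crude bound $\|T_k\|_{L^{p_1}\times L^{p_2}\to L^p}\lesssim k^{C}\|\Omega\|_{L^1}$ on the whole Banach range. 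This follows from the local-Fourier-series representation: each term is a product of a bounded Fourier-series operator and a maximal or square function, exactly as in Bhojak--Shrivastava. It uses only the size of $\Omega$ and the polynomial number of scales.
\end{enumerate}

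Finally, we interpolate (1) with (2) in the bilinear Riesz--Thorin/complex sense, as in Grafakos--He--Honz\'ik. This gives $\|T_k\|\lesssim k^{C}2^{-\epsilon'k}\|\Omega\|_{\mathcal K_a}$ for every $(p_1,p_2,p)$ with $1<p_1,p_2,p<\infty$, and these bounds are summable in $k$. Adding the bound for $T_0$ proves Theorem~\ref{thmmain}.
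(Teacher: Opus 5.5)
Your outline (Coifman--Meyer low part, $(1+k)^C$ growth on the Banach range, a decaying anchor estimate, then interpolation) parallels the paper's, and your bound $|\partial^\alpha m_0(\xi)|\lesssim|\xi|^{-a}\|\Omega\|_{\mathcal K_a}$ is correct. However, the decay estimate (1) fails along the route you describe, and it carries the whole theorem. After Minkowski moves $\int|\Omega(\theta)|\,d\sigma(\theta)$ outside the fibre sum, you are estimating, for each fixed $\theta$, the fibre $\ell^2$ norm of the single-direction symbol $\widehat\phi_{\mathrm{rad}}(\theta\cdot\xi)$. That symbol has size $\sim1$ near the line $\theta\cdot\xi=0$. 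A fibre with $|\xi_1|\sim2^k$ meets that line inside the annulus for a set of $\theta$ of measure $\sim1$, and there it picks up $\sim|\theta_2|^{-1}\ge1$ tiles. So for any $\Omega$ with $|\Omega|\equiv1$, your upper bound is $\gtrsim\|\Omega\|_{L^1}$ uniformly in $k$. The quantity $\int|\Omega|\,|\theta_2|^{-1/2}\,d\sigma$ you arrive at is indeed $\le\|\Omega\|_{\mathcal K_a}$, but it does not decay. No later step can repair this. Interpolating with $2^{k/2}$ inside the $\theta$-integral cannot go below $1$, since both bounds are $\ge1$ for every $\theta$. Splitting $|\theta_2|\gtrless2^{-\delta k}$ leaves the region $|\theta_2|\sim1$ contributing $O(1)$. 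Without decay, interpolation against $(1+k)^C$ is not summable in $k$. The decay lives in the $\theta$-averaging that Minkowski discards, and applying \eqref{alpha} only in the fixed direction $e_2$ cannot see it.

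The fix is to skip Minkowski and use your first-paragraph bound tile by tile; that bound applies \eqref{alpha} in the direction $\xi/|\xi|$ of each tile. A fibre of $\{|\xi|\sim2^k\}$ contains $\lesssim2^k$ unit tiles, so your fibre criterion gives $2^{k/2}\cdot2^{-ka}\|\Omega\|_{\mathcal K_a}=2^{-k(a-1/2)}\|\Omega\|_{\mathcal K_a}$. This is exactly where $a>1/2$ enters. It is the multiplier-side counterpart of the paper's bound for $S_2^-$: there the loss $\lambda^{(1+\varepsilon_1)/2}$ from Cauchy--Schwarz on $\|a_{1,\cdot}\|_{\ell^1}$ is beaten by the cap estimate $\int_{|\omega\cdot\theta|\le\delta}|\Omega|\,d\sigma\le\delta^a\|\Omega\|_{\mathcal K_a}$ with $\delta\approx\lambda^{-1}$.

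There is a second, smaller gap: this is a single-scale bound, while $T_k=\sum_jT_{m_j\widehat\psi(2^{j-k}\cdot)}$. The pieces with $|\xi_1|\sim|\xi_2|$ sum in $j$ by Cauchy--Schwarz. When $|\xi_1|\ll|\xi_2|$ (or the reverse), the low-frequency input is not localized at scale $j$, and the single-scale $L^2\times L^2\to L^1$ bounds do not sum. The paper, following Bhojak--Shrivastava, instead anchors these families at $L^2\times L^\infty\to L^2$ and $L^\infty\times L^2\to L^2$. There the high-frequency input and the output are both localized at scale $j$, so Cauchy--Schwarz in $j$ closes. You can obtain those single-scale bounds by applying your fibre criterion to the adjoint symbols, e.g.\ $m(-\xi_1-\xi_2,\xi_2)$, which obey the same sup bound and tile count. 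With one anchor per family, your interpolation step then goes through as in the paper.
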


Furthermore, we extend our analysis to address the corresponding maximal variants of these operators, which historically require more complex pointwise controls (see, for instance, \cite{BH19, Par25}). Recall that, the maximally truncated bilinear operator $\mathcal {T}_\Omega^*$ is defined by
\[
\mathcal {T}_\Omega^*(f_1, f_2)(x) = \sup_{\epsilon>0} \left| \int_{|(y_1, y_2)|
	> \epsilon} \frac{\Omega((y_1, y_2)/|(y_1, y_2)|)}{|(y_1, y_2)|^2} f_1(x-y_1) f_2(x-y_2) \, dy_1 dy_2 \right|.
\]
The boundedness of smooth maximal multilinear singular integrals was initially established by Grafakos and Torres~\cite{GT2002_maximal}. For rough kernels, Buri\'ankov\'a and Honz\'ik~\cite{BH19} first obtained the $L^2 \times L^2 \to L^1$ boundedness of the bilinear maximal operator when $\Omega \in L^2(\mathbb{S}^{2n-1})$, alongside $L^{p_1} \times L^{p_2} \to L^p$ bounds for $\Omega \in L^\infty(\mathbb{S}^{2n-1})$. This initial $L^2$ result was later extended to general multilinear settings by Grafakos, He, Honz\'ik, and Park~\cite{GHHP2024}. Recently, a significant advancement was made by Park~\cite{Par25}, who completely extended these maximal estimates to the full range of exponents under the much weaker $L^q(\mathbb{S}^1) $ ($q>1$) condition.
Following this, Bhojak and Shrivastava~\cite{bhojak2025alternate} successfully applied their alternative framework to establish bounds for the maximally truncated bilinear operators in the one-dimensional setting as in Theorem B. 

Building on the above works, we establish the following boundedness of $\mathcal {T}_\Omega^*$.

\begin{theorem}\label{thmtrunc}
	Let $1 < p_1, p_2, p < \infty$ with $1/p = 1/p_1 + 1/p_2$.
	Suppose that $\Omega$ satisfies condition~\eqref{cancellation} and condition~\eqref{alpha} with $1/2 < a < 1$.
	Then there exists a constant $C > 0$ such that
	\[
	\|\mathcal {T}_\Omega^*(f_1, f_2)\|_{L^p(\mathbb{R})} \le C \|\Omega\|_{\mathcal{K}_a} \|f_1\|_{L^{p_1}(\mathbb{R})} \|f_2\|_{L^{p_2}(\mathbb{R})}.
	\]
\end{theorem}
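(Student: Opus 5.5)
The plan is to reduce the maximally truncated operator to the non-truncated operator of Theorem~\ref{thmmain}, plus a smooth maximal part and a square-function part. This follows the scheme of Bhojak--Shrivastava for Theorem B and of Buri\'ankov\'a--Honz\'ik and Park. Write $K(y)=\Omega(y/|y|)|y|^{-2}$ on $\mathbb{R}^2$. Fix a smooth radial $\Phi$ that equals $1$ near the origin, is supported in the unit ball, and has $\widehat{\Phi}$ Schwartz. For $\epsilon=2^{-k}t$ with $1\le t<2$, the truncation $K\chi_{|y|>\epsilon}$ is compared with the smooth truncation $K_\epsilon^{\sharp}:=K\ast\bigl(\delta-\Phi_\epsilon\bigr)$, where $\Phi_\epsilon=\epsilon^{-2}\Phi(\cdot/\epsilon)$ (a Cotlar-type device). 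This gives the pointwise bound
\[
\mathcal{T}_\Omega^*(f_1,f_2)\le \sup_{\epsilon}\bigl|\Phi_\epsilon\ast_{\mathrm{bil}}\mathcal{T}_\Omega(f_1,f_2)\bigr| + \sup_{\epsilon}\bigl|(K\chi_{|y|>\epsilon}-K^{\sharp}_\epsilon)\ast_{\mathrm{bil}}(f_1,f_2)\bigr|.
\]
Here $\ast_{\mathrm{bil}}$ denotes the bilinear convolution $\int K(y)f_1(x-y_1)f_2(x-y_2)\,dy$.

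The first term is the more delicate one, since $\Phi_\epsilon$ acts bilinearly. I would instead mollify in the output variable: replace $\Phi_\epsilon$ by the product $\phi_\epsilon(y_1)\phi_\epsilon(y_2)$. Then the frequency cutoff $\widehat\phi(\epsilon\xi_1)\widehat\phi(\epsilon\xi_2)$ lets me decompose $K=\sum_j K_j$ dyadically in $|y|$ (pieces at scale $2^j$). The pieces with $2^j\ll \epsilon$ are removed entirely up to an error whose multiplier decays like $(2^{j}/\epsilon)^{\delta}$. The pieces with $2^j\gg\epsilon$ are unaffected. For the pieces with $2^j\sim\epsilon$, the difference is an $L^1$ kernel of size $\lesssim |\Omega|$, supported in the annulus of radius $\epsilon$. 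Its supremum over $\epsilon$ is dominated by the averaging operator along directions. Concretely it is bounded by
\[
\int_{\mathbb{S}^1}|\Omega(\theta)|\, M_\theta(f_1,f_2)\,d\sigma(\theta),
\]
where $M_\theta$ is a bilinear maximal average along the ray $\theta$. That operator in turn is bounded by $M f_1\cdot Mf_2$ when $\theta_1\theta_2\neq0$, and by $\|f_1\|\,Mf_2$-type products in degenerate directions. Hölder then bounds this by $\|\Omega\|_{L^1}\le\|\Omega\|_{\mathcal{K}_a}$ in $L^{p_1}\times L^{p_2}\to L^p$ for all $p_i>1$. The remaining tail $\sum_{2^j\gg\epsilon}$ is then $\phi_\epsilon\ast\mathcal{T}_\Omega(f_1,f_2)$ minus a controlled error, so its supremum is at most $M(\mathcal{T}_\Omega(f_1,f_2))$. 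This is bounded in $L^p$ for $p>1$ by Theorem~\ref{thmmain}.

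The main obstacle is the error terms: the pieces $K_j$ with $2^j\ll\epsilon$ or $2^j\gg\epsilon$ interacting with the mollifier. The supremum over $\epsilon$ must be taken inside a sum over $j$ with geometric decay. For this I would reuse the key estimate from the proof of Theorem~\ref{thmmain}. There, $\mathcal{T}_{K_j}$ is analysed via local Fourier expansions of $f_1,f_2$ on intervals of length $2^j$, and the condition~\eqref{alpha} with $a>1/2$ gives decay $2^{-\delta|l|}$ for the interaction of $K_j$ with frequency band $2^{l-j}$. I would replace the supremum over $\epsilon$ by the $\ell^2$ sum over $k$ (using $\sup_k|a_k|\le(\sum_k|a_k|^2)^{1/2}$), with $t$ handled by a Sobolev embedding in $t$ costing $2^{|l|/2}$-type losses. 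This is where $a>1/2$ must leave enough room, and I expect that the decay exponent is at least $a-\tfrac12$. The resulting vector-valued estimates are proved at $L^2\times L^2\to L^1$ via Plancherel and the $\mathcal{K}_a$ bound on $\widehat{K_j}$. They are interpolated with crude $L^{p_1}\times L^{p_2}\to L^{p}$ bounds that are polynomial in $|l|$ and come from the directional-maximal domination above. Summing over $l$ then gives the full range $1<p_1,p_2,p<\infty$ with constant $C\|\Omega\|_{\mathcal{K}_a}$.
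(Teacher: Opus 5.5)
\textbf{There is a genuine gap.} Your reduction breaks at the step ``the remaining tail $\sum_{2^j\gg\epsilon}$ is $\phi_\epsilon\ast\mathcal T_\Omega(f_1,f_2)$ minus a controlled error''. That step mixes two different mollifications.

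With the input mollifier $\phi_\epsilon\otimes\phi_\epsilon$ (multiplier $\widehat\phi(\epsilon\xi_1)\widehat\phi(\epsilon\xi_2)$), it is true that the pieces $K_j$ with $2^j\ll\epsilon$ are killed up to $O(2^j/\epsilon)$ by cancellation. But the main term is then $\mathcal T_\Omega(\phi_\epsilon\ast f_1,\phi_\epsilon\ast f_2)$, not $\phi_\epsilon\ast\mathcal T_\Omega(f_1,f_2)$. Up to those errors, its supremum over $\epsilon$ is just the smoothly truncated maximal operator you started with, and it is not dominated by $M(\mathcal T_\Omega(f_1,f_2))$. So the argument is circular.

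With a genuine output mollifier, the multiplier is $\widehat\phi(\epsilon(\xi_1+\xi_2))\widehat{K_j}(\xi_1,\xi_2)$, which only localizes $\xi_1+\xi_2$. In the high--high region $\xi_1\approx-\xi_2$, $|\xi|\sim 2^{-j}\gg\epsilon^{-1}$, this multiplier is of size $O(1)$. So the small-scale pieces are not removed, and there is no factor $(2^j/\epsilon)^\delta$. This high--high interaction (two high input frequencies producing a low output frequency) is exactly what makes bilinear maximal truncations harder than linear ones, and your plan has no mechanism for it.

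The paper, following Bhojak--Shrivastava, first splits into $LL$, $HL$, $LH$, $HH$:
\begin{itemize}
\item The $LL$ kernel is a smooth bilinear Calder\'on--Zygmund kernel, so Cotlar's inequality applies.
\item For $HL$ and $LH$, the output frequency is comparable to the high input frequency $\sim 2^{i+k}$. Here the output-mollifier comparison is legitimate and gives $|T^{I,*}_{\Omega,k}|\lesssim |T^{I}_{\Omega,k}|+M(T^{I}_{\Omega,k})+M(M^{I}_{\Omega,k})$, with $M$ the Hardy--Littlewood maximal operator.
\item For $HH$, one simply uses $T^{HH,*}_{\Omega,k}\le\sum_i|T^{HH,i}_{\Omega,k}|$. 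This is affordable because at the $L^2\times L^2\to L^1$ anchor, the single-scale decay $2^{-ck}$ of Proposition~\ref{prop_decay_strong} can be summed over $i$ in $\ell^1$ by Cauchy--Schwarz and Littlewood--Paley.
\end{itemize}
The non-dyadic part of the truncation goes into $M_{|\Omega|}$ and is handled by Theorem~\ref{thmmax}.

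Your error-term scheme also fails as stated, for three reasons. First, there is no Plancherel theorem for bilinear operators into $L^1$. Pointwise decay of $\widehat{K_j}$, which is all that condition~\eqref{alpha} gives directly, does not yield $L^2\times L^2\to L^1$ bounds. The decay has to come from the local Fourier-series estimate of Proposition~\ref{prop_decay_strong}.

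Second, a single $L^2\times L^2\to L^1$ anchor cannot control the $HL$/$LH$ errors, where only one input is frequency-localized. Summing $L^1$ norms over scales gives $\sum_k\|\Delta_k f_1\|_2\|f_2\|_2$, which diverges. This is why the paper uses the anchors $L^2\times L^\infty\to L^2$ and $L^\infty\times L^2\to L^2$ for those families.

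Third, a Sobolev embedding in $t$ costing $2^{|l|/2}$ cannot be absorbed. The decay produced by the $S_2^-$ estimate is $\lambda^{-c}$ with $c<a-\frac12<\frac12$ (strictly below $a-\frac12$, not at least it), so the net exponent would be positive. Non-dyadic $\epsilon$ should instead be handled through the annulus piece, i.e.\ through $M_{|\Omega|}$.

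A smaller point: $M_\theta(f_1,f_2)\le Mf_1\,Mf_2$ is false, since the average of a product is not bounded by the product of averages. What holds is $M_\theta(f_1,f_2)\lesssim M_sf_1\,M_{s'}f_2$ with $M_sf=(M|f|^s)^{1/s}$, $\frac1s+\frac1{s'}=1$, $s<p_1$, $s'<p_2$. This is available here because $p>1$.
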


In order to prove the boundedness of $\mathcal {T}_\Omega^*$, it is necessary to consider the associated maximal operator $M_\Omega$, defined by
\[
M_\Omega(f_1, f_2)(x) = \sup_{R>0} \frac{1}{R^2} \int_{|(y_1, y_2)|
	\le R} \left| \Omega\left(\frac{(y_1, y_2)}{|(y_1, y_2)|}\right) \right| |f_1(x-y_1) f_2(x-y_2)| \, dy_1 dy_2.
\]

Note that, the boundedness of $M_\Omega$ in the bilinear and multilinear cases was previously obtained by Buri\'ankov\'a and Honz\'ik~\cite{BH19}, Grafakos, He, Honz\'ik, and Park~\cite{GHHP2024}, and Bhojak and Shrivastava~\cite{bhojak2025alternate}. 
We obtain the following result for $M_\Omega$.

\begin{theorem}\label{thmmax}
	Let $1 < p_1, p_2, p < \infty$ with $1/p = 1/p_1 + 1/p_2$.
	Suppose that $\Omega$ satisfies condition~\eqref{alpha} with $1/2 < a < 1$.
	Then there exists a constant $C > 0$ such that
	\[
	\|M_\Omega(f_1, f_2)\|_{L^p(\mathbb{R})} \le C \|\Omega\|_{\mathcal{K}_a} \|f_1\|_{L^{p_1}(\mathbb{R})} \|f_2\|_{L^{p_2}(\mathbb{R})}.
	\]
\end{theorem}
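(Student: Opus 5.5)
We prove Theorem~\ref{thmmax} by a direct pointwise argument in polar coordinates, reducing $M_\Omega$ to a superposition of directional bilinear maximal operators whose weights are controlled by the $\mathcal{K}_a$ norm. Writing $(y_1,y_2)=r\theta$ with $\theta=(\cos\phi,\sin\phi)$, we have
\[
\frac{1}{R^2}\int_{|y|\le R}|\Omega(y/|y|)||f_1(x-y_1)f_2(x-y_2)|\,dy
=\int_{\mathbb{S}^1}|\Omega(\theta)|\Big(\frac{1}{R^2}\int_0^R |f_1(x-r\theta_1)f_2(x-r\theta_2)|\,r\,dr\Big)d\sigma(\theta),
\]
and since $r/R^2\le 1/R$, the inner average is at most $\mathcal{M}_\theta(f_1,f_2)(x):=\sup_{R>0}\frac1R\int_0^R|f_1(x-r\theta_1)f_2(x-r\theta_2)|\,dr$. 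Hence
\[
M_\Omega(f_1,f_2)(x)\le \int_{\mathbb{S}^1}|\Omega(\theta)|\,\mathcal{M}_\theta(f_1,f_2)(x)\,d\sigma(\theta).
\]

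The key step is a direction-dependent estimate for $\mathcal{M}_\theta$. By H\"older's inequality in $r$, with exponents $p_1/p$ and $p_2/p$,
\[
\mathcal{M}_\theta(f_1,f_2)(x)\le \Big(\sup_R\frac1R\int_0^R|f_1(x-r\theta_1)|^{p_1/p}dr\Big)^{p/p_1}\Big(\sup_R\frac1R\int_0^R|f_2(x-r\theta_2)|^{p_2/p}dr\Big)^{p/p_2}.
\]
Substituting $s=r\theta_j$ shows that $\frac1R\int_0^R|g(x-r\theta_j)|\,dr\le 2M g(x)$ for $\theta_j\neq0$, where $M$ is the Hardy--Littlewood maximal function. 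This gives $\mathcal{M}_\theta(f_1,f_2)\le C\,M(|f_1|^{p_1/p})^{p/p_1}M(|f_2|^{p_2/p})^{p/p_2}$ uniformly in $\theta$. However, this uses $M$ on $L^{p}$ spaces, which is fine only if $p>1$, and $p>1$ is given. Applying H\"older and the maximal theorem then yields $\|\mathcal{M}_\theta(f_1,f_2)\|_{L^p}\lesssim\|f_1\|_{p_1}\|f_2\|_{p_2}$ uniformly in $\theta$. Note that the exceptional directions $\theta_j=0$ have measure zero.

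Minkowski's integral inequality ($p\ge1$) then gives $\|M_\Omega(f_1,f_2)\|_{L^p}\lesssim\|\Omega\|_{L^1(\mathbb{S}^1)}\|f_1\|_{p_1}\|f_2\|_{p_2}$. Finally, $\|\Omega\|_{L^1}\le\|\Omega\|_{\mathcal{K}_a}$ because $|\theta\cdot\xi|^{-a}\ge1$.

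The main point to check is the uniformity of the one-dimensional averaging bound in $\theta$ when $\theta_j$ is small. The change of variables gives $\frac1R\int_0^R|g(x-r\theta_j)|\,dr=\frac{1}{R|\theta_j|}\int_0^{R|\theta_j|}|g(x-\operatorname{sgn}(\theta_j)s)|\,ds$, which is at most $2Mg(x)$ with no loss, so uniformity holds. Thus the $\mathcal{K}_a$ hypothesis with $a>1/2$ is not needed here beyond $L^1$ control; the stronger condition is needed only for the singular operators in Theorems~\ref{thmmain} and~\ref{thmtrunc}.
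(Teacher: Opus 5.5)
Your proof is correct, and it takes a genuinely different and much more elementary route than the paper.

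\textbf{The paper's route.} It follows Section~7 of Bhojak--Shrivastava. It dominates $M_\Omega$ by a dyadic supremum of the single-scale operators $\mathcal{T}^i_\Omega$ and splits this into $LL$, $HL$, $LH$, $HH$ pieces $M^I_{\Omega,k}$. It then proves $(1+k)^{C_I}$ growth bounds via Fefferman--Stein and Littlewood--Paley theory. Next it imports $2^{-ck}$ decay at the anchor points $L^2\times L^2\to L^1$, $L^2\times L^\infty\to L^2$ and $L^\infty\times L^2\to L^2$ from Proposition~\ref{prop_decay_strong}. That proposition is where $\mathcal{K}_a$ with $a>1/2$ actually enters. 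Finally it interpolates and sums in $k$.

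\textbf{Your route.} You use polar coordinates and H\"older in the radial variable to get the pointwise bound
\[
M_\Omega(f_1,f_2)(x)\le 2\,\|\Omega\|_{L^1(\mathbb{S}^1)}\,\bigl(M(|f_1|^{p_1/p})(x)\bigr)^{p/p_1}\bigl(M(|f_2|^{p_2/p})(x)\bigr)^{p/p_2}.
\]
H\"older and the Hardy--Littlewood theorem on $L^p$, $p>1$, then finish the proof. Because this bound is uniform in $\theta$, you can integrate it in $\theta$ for each fixed $R$ before taking the supremum. So Minkowski's inequality is not needed, and no measurability question about $\theta\mapsto\mathcal{M}_\theta(f_1,f_2)(x)$ arises. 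Your argument shows that, in the stated range, the theorem holds for every $\Omega\in L^1(\mathbb{S}^1)$. The fractional condition and the restriction $a>1/2$ are therefore superfluous for this statement.

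\textbf{What each approach buys.} The paper's machinery is the kind of argument that can reach $p\le 1$. Its $HH$ anchor estimate is an $L^2\times L^2\to L^1$ bound, which your key step $\|M(|f_1|^{p_1/p})\|_{L^p}\lesssim\|f_1\|_{L^{p_1}}^{p_1/p}$ cannot produce. It also yields growth and decay bounds for the individual frequency-localized pieces $M^I_{\Omega,k}$, and these are reused in Steps~5--6 of the proof of Theorem~\ref{thmtrunc}. Your argument covers the full term $M_{|\Omega|}$ in Step~1 of that proof, but not those pieces.
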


Having established the boundedness of the bilinear operators and their maximal variants under the fractional condition \eqref{alpha}, it is natural to demonstrarte how this framework relates to the recent advancements governed by the Orlicz spaces $L(\log L)^\alpha(\mathbb{S}^{1})$. While the Orlicz space condition imposes an isotropic constraint on the overall global size of the kernel's singularities, our condition \eqref{alpha} imposes an anisotropic, geometric restriction specifically targeting singularities concentrated near equators. The following theorem makes this distinction precise.

\begin{theorem}\label{thmincomparability}
	Let $0 < a < 1$ and $\alpha > 1$.
	The fractional angular condition~\eqref{alpha} and the Orlicz space $L(\log L)^\alpha(\mathbb{S}^{1})$ are mutually incomparable.
	\begin{enumerate}
		\item[\rm(i)] \textbf{(Escape from $L(\log L)^\alpha$)} There exists an integrable function $\Omega$ on $\mathbb{S}^{1}$ that uniformly satisfies Condition ~\eqref{alpha} but fails to belong to $L(\log L)^\alpha(\mathbb{S}^{1})$.
		\item[\rm(ii)] \textbf{(Failure of the fractional condition)} There exists a function $\Omega \in L(\log L)^\alpha(\mathbb{S}^{1})$ that strictly fails condition~\eqref{alpha}.
	\end{enumerate}
\end{theorem}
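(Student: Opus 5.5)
We treat (ii) with an explicit power singularity sitting exactly where the weight blows up. For (i) we replace the geometric condition by a Morrey-type sufficient condition and build a function satisfying it from sparse, uniformly spread spikes. Throughout, parametrize $\mathbb{S}^1$ by $\theta=(\cos t,\sin t)$, $t\in(-\pi,\pi]$, and $\xi=(\cos\varphi,\sin\varphi)$. Then $\theta\cdot\xi=\cos(t-\varphi)$ vanishes exactly at $t=\varphi\pm\pi/2$, and
\[
|\cos(t-\varphi)|^{-a}\lesssim |t-s_+|^{-a}+|t-s_-|^{-a},\qquad s_\pm=\varphi\pm\pi/2 ,
\]
with distances taken mod $2\pi$. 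Condition \eqref{alpha} is therefore equivalent to the uniform Riesz-potential bound $\sup_s\int|\Omega(t)||t-s|^{-a}\,dt<\infty$.

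For (ii), take $\Omega(t)=|t|^{-(1-a)}$ near $t=0$, extended boundedly elsewhere; subtract a constant if a mean-zero example is desired. Since $\Omega\in L^q$ for every $q<1/(1-a)$, in particular for some $q>1$, we get $\Omega\in L(\log L)^\alpha(\mathbb{S}^1)$. Choosing $\xi=(0,1)$ gives $|\theta\cdot\xi|=|\sin t|\sim|t|$, so near $t=0$ the integrand is $\sim|t|^{a-1}|t|^{-a}=|t|^{-1}$. This is not integrable, so \eqref{alpha} fails.

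For (i), first record the sufficient condition. Suppose that for some $b>a$ every arc $I$ satisfies
\[
\int_I|\Omega|\le C|I|^b .
\]
Splitting into dyadic shells $|t-s|\sim 2^{-j}$ then gives $\int|\Omega||t-s|^{-a}\le C\sum_j 2^{ja}2^{-jb}<\infty$ uniformly in $s$, hence \eqref{alpha}.

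Now set $\Omega=\sum_{k\ge 2}\lambda_k\mathbf 1_{E_k}$, built as follows. $E_k$ is a union of $N_k$ equally spaced arcs, each of length $\ell_k=2^{-k}/N_k$, so $|E_k|= 2^{-k}$. The heights are $\lambda_k=2^k k^{-\gamma}$ with $1<\gamma\le\alpha+1$. Then $\|\Omega\|_{L^1}\le\sum_k k^{-\gamma}<\infty$. Since the $E_k$ may be taken disjoint or only improve things, we get
\[
\int\Omega\,\log^\alpha(e+\Omega)\gtrsim\sum_k k^{-\gamma}(k\log 2)^\alpha=\infty ,
\]
so $\Omega\notin L(\log L)^\alpha$.

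For the Morrey bound, take an arc $I$ of length $r$ and consider each $k$ separately.

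If $r\ge 1/N_k$, then $I$ meets $\lesssim rN_k$ arcs of $E_k$. Its contribution is $\lesssim \lambda_k\ell_k rN_k=rk^{-\gamma}\le r^b k^{-\gamma}$.

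If $r<1/N_k$, then $I$ meets $O(1)$ arcs of $E_k$. Its contribution is at most $\lambda_k\min(r,\ell_k)\le\lambda_k\ell_k^{1-b}r^b$.

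Choose $N_k$ so large that $\lambda_k\ell_k^{1-b}\le 2^{-k}$; this is possible since $\ell_k\to0$ as $N_k\to\infty$. Summing over $k$ gives $\int_I\Omega\le C r^b\bigl(\sum k^{-\gamma}+\sum 2^{-k}\bigr)$, which proves (i).

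The main delicate point is this choice of the spreading parameters $N_k$. They must make each spike layer invisible to every arc at the Hölder scale $r^b$ with $b>a$, while still keeping the total $L(\log L)^\alpha$ mass divergent. The two-case estimate above handles this. If a mean-zero example is required in (i) as well, subtract the constant $\frac{1}{2\pi}\int\Omega$, which preserves both properties.
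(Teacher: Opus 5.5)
Your proposal is correct. Part (ii) is essentially the paper's example ($|\theta\cdot e_1|^{-(1-a)}$ tested at $\xi=e_1$). In part (i) your construction is of the same type as the paper's: layers of $N_k$ equally spaced thin arcs with $\sum_k\lambda_k|E_k|<\infty$ but $\sum_k\lambda_k|E_k|\log^\alpha\lambda_k=\infty$, and $N_k$ taken huge. Where you differ is in how you verify $\Omega\in\mathcal K_a$.

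The paper works directly with the weight. For each $\xi$ it bounds $\int_{E_k}|\theta\cdot\xi|^{-a}\,d\sigma$ by splitting $E_k$ into two groups:
\begin{itemize}
\item at most four ``bad'' arcs meeting the zeros of $\theta\cdot\xi$, contributing $\lesssim\delta_k^{1-a}$ via a rearrangement argument;
\item ``good'' arcs grouped by grid distance $m$, contributing $\lesssim\delta_kN_k^a\sum_m m^{-a}\lesssim w_k$.
\end{itemize}
You instead isolate a direction-free Morrey criterion $\int_I|\Omega|\lesssim|I|^b$ with $b>a$. You check it layer by layer via the cases $r\ge 1/N_k$ and $r<1/N_k$, which play the role of the good/bad split, and then recover $\mathcal K_a$ by a dyadic-shell summation.

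What your route buys:
\begin{itemize}
\item It is more modular and nearly sharp, since \eqref{eqsmallcap} shows conversely that $\mathcal K_a$ forces $\int_I|\Omega|\lesssim|I|^a$ for every arc.
\item Your free parameter $\gamma\in(1,\alpha+1]$ makes the example work for every $\alpha>0$. The paper's normalization $h_kw_k=k^{-\alpha}$ uses $\alpha>1$ for integrability.
\end{itemize}
The paper's route, in exchange, is a self-contained computation that yields the explicit layer bound $\sup_\xi I_k(\xi)\lesssim\delta_k^{1-a}+w_k$.

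Two points you should make explicit. First, you need $a<b<1$: the inequality $\min(r,\ell_k)\le\ell_k^{1-b}r^b$ needs $b\le1$, and the choice of $N_k$ needs $b<1$. Second, ``overlaps only improve things'' follows from superadditivity of $\Phi(u)=u\log^\alpha(e+u)$. Indeed $\Phi(u+v)\ge\Phi(u)+\Phi(v)$ because $\log(e+\cdot)$ is increasing, and this gives $\int\Phi(\Omega)\ge\sum_k\Phi(\lambda_k)|E_k|$. The paper handles overlaps differently, through $A_k=E_k\setminus\bigcup_{m>k}E_m$ with $\sigma(A_k)\ge\frac23 w_k$.
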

The lack of mutual inclusion is not restricted to the one-dimensional setting;
the construction naturally extends to higher dimensions $\mathbb{S}^{n-1}$, as detailed in Remark~\ref{rem_higher_dim} at the end of Section \ref{sec4}.

The main contributions of this paper can be summarized as follows:

\begin{itemize}
	\item \textbf{New fractional condition:} We introduce a fractional directional condition ($\mathcal{K}_a$)  that controls singularities along specific directions. Under this condition, we establish the Banach-range boundedness of bilinear rough singular integral operators, as well as their associated maximal and maximally truncated operators. This approach differs fundamentally from the global size restrictions imposed by classical Orlicz spaces.
	\item \textbf{Alternative approach for full range bounds:} Instead of traditional wavelet decompositions, we use local Fourier series expansions. This approach allows us to establish boundedness across the full range of exponents ($1 < p_1, p_2, p < \infty$) for these bilinear operators and their maximal versions.
	
		\item \textbf{Strict incomparability:} We show that the fractional directional condition ($\mathcal{K}_a$) and the Orlicz space $L(\log L)^\alpha$ are mutually incomparable; that is, neither class contains the other. This demonstrates that our framework captures a genuinely distinct class of rough kernels not covered by classical Orlicz conditions.
		
\end{itemize}

The remainder of the paper is organized as follows. In Section~\ref{sec2}, we prove Theorem~\ref{thmmain} via a local Fourier expansion scheme. Section~\ref{secmaximal} extends these geometric and analytic arguments to the maximal and maximally truncated operators, proving Theorems~\ref{thmtrunc} and~\ref{thmmax}. Finally, Section~\ref{sec4} gives a measure-theoretic analysis of the relation between the fractional condition~\eqref{alpha} and the classical Orlicz spaces $L(\log L)^\alpha(\mathbb{S}^{1})$.

\section{\bf Proof of Theorem \ref{thmmain}}\label{sec2}

In this section, we aim to prove Theorem~\ref{thmmain}.  Some of the ideas in our argument are inspired by the alternative approach of~\cite{bhojak2025alternate}, which is based on local Fourier series expansions of the input functions.

Let us fix a function \( \Omega \) satisfying \( \eqref{alpha} \) with \( 0 < a < 1 \) and zero mean. Following the standard methodology, we utilize a spatial decomposition of the kernel. Let \( \beta \in C_c^\infty(\mathbb{R}) \) be a radial function supported in \( (1/2, 2) \) such that \( \sum_{i \in \mathbb{Z}} \beta_i(t) = \sum_{i \in \mathbb{Z}} \beta(2^i t) = 1 \) for all \( t > 0 \). We define the local kernels and their corresponding operators as
\begin{equation}\label{local_op}
	K^i(y_1, y_2) = \frac{\Omega((y_1, y_2)/|(y_1, y_2)|)}{|(y_1, y_2)|^{2}} \beta_i(|(y_1, y_2)|), \quad i \in \mathbb{Z},
\end{equation}
\[
\mathcal {T}_\Omega^i(f_1, f_2)(x) = \int_{\mathbb{R}^2} K^i(y_1, y_2) f_1(x - y_1) f_2(x - y_2) \, dy_1 dy_2.
\]

For the multi-scale synthesis, we use the Littlewood-Paley decomposition. Let \( \phi \in \mathcal{S}(\mathbb{R}) \) be supported in \( B(0, 2) \) with \( \widehat{\phi}(\xi) = 1 \) for \( |\xi| \le 1 \) and $ 0 \le \widehat{\phi} \le 1$. Define \( \widehat{\psi}(\xi) = \widehat{\phi}(\xi) - \widehat{\phi}(2\xi) \). For $k \in \mathbb{Z}$, we set the scaled functions $\phi_k(x) = 2^k \phi(2^k x)$ and $\psi_k(x) = 2^k \psi(2^k x)$. This yields the following smooth partition of unity
$$\widehat{\phi}(\xi) + \sum_{k \in \mathbb{N}} \widehat{\psi}_k(\xi) = 1 \quad \text{for all } \xi \neq 0.$$
Consequently, the bilinear operator can be decomposed into Low-Low, High-Low, Low-High, and High-High frequency interactions in the way that

\begin{equation}\label{decomp_main}
	\mathcal {T}_\Omega(f_1, f_2) = \mathcal {T}_\Omega^{LL}(f_1, f_2) + \sum_{k \in \mathbb{N}} \left( T_{\Omega, k}^{HL} + T_{\Omega, k}^{LH} + T_{\Omega, k}^{HH} \right)(f_1, f_2),
\end{equation}
which are defined respectively as
\begin{align*}
	\mathcal {T}_\Omega^{LL}(f_1, f_2)(x) &= \sum_{i \in \mathbb{Z}} \mathcal {T}_\Omega^i(f_1 * \phi_i, f_2 * \phi_i)(x), \\
	T_{\Omega, k}^{HL}(f_1, f_2)(x) &= \sum_{i \in \mathbb{Z}} \mathcal {T}_\Omega^i(f_1 * \psi_{i+k}, f_2 * \phi_{i+k-10})(x), \\
	T_{\Omega, k}^{LH}(f_1, f_2)(x) &= \sum_{i \in \mathbb{Z}} \mathcal {T}_\Omega^i(f_1 * \phi_{i+k-10}, f_2 * \psi_{i+k})(x), \\
	T_{\Omega, k}^{HH}(f_1, f_2)(x) &= \sum_{i \in \mathbb{Z}} \sum_{\nu=-10}^{10} \mathcal {T}_\Omega^i(f_1 * \psi_{i+k+\nu}, f_2 * \psi_{i+k})(x).
\end{align*}

The following Proposition constitutes the first technical step in this approach.

\begin{proposition}\label{prop_decay_strong}
	Let \( 1/2 < a < 1 \) and suppose that $\Omega$ satisfies condition~\eqref{alpha}.
	Let \( \mathcal {T}_\Omega^0 \) be the local operator associated with the kernel
	\[
	K^0(y_1,y_2)
	=
	\frac{\Omega\!\left((y_1,y_2)/|(y_1,y_2)|\right)}{|(y_1,y_2)|^2}\,\beta\!\left(|(y_1,y_2)|\right).
	\]
	Then there exists a constant \( c > 0 \) such that for all \( \lambda \ge 1 \),
	\begin{equation}\label{eqT12-Ka}
		\big|\langle \mathcal {T}_\Omega^0(f_1,f_2),f_3\rangle\big|
		\lesssim
		\lambda^{-c}\,
		\|\Omega\|_{\mathcal K_a}\,
		\|f_{l_1}\|_{L^2(\mathbb R)}
		\|f_{l_2}\|_{L^2(\mathbb R)}
		\|f_{l_3}\|_{L^\infty(\mathbb R)},
	\end{equation}
	whenever
	\[
	\operatorname{supp}\widehat f_{l_1}\cup \operatorname{supp}\widehat f_{l_2}
	\subset \{\lambda\le |\xi|\le 2\lambda\},
	\qquad
	\operatorname{supp}\widehat f_{l_3}\subset \{|\xi|\le 2\lambda\},
	\]
	for any permutation $\{l_1,l_2,l_3\}=\{1,2,3\}$.
\end{proposition}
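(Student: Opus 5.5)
We would prove \eqref{eqT12-Ka} by first reducing to a two-dimensional $L^2$-restriction estimate, and then extracting decay from the fractional condition~\eqref{alpha}. By duality we write
\[
\langle \mathcal T_\Omega^0(f_1,f_2),f_3\rangle=\int_{\mathbb R^2}K^0(y_1,y_2)\,G(y_1,y_2)\,dy_1dy_2,
\qquad
G(y_1,y_2)=\int_{\mathbb R} f_1(x-y_1)f_2(x-y_2)f_3(x)\,dx.
\]
The roles of the three functions are symmetric after a linear change of variables $(y_1,y_2)\mapsto(y_1-y_2,-y_2)$, etc. This change maps $\mathbb S^1$ to itself up to a bounded Jacobian and a smooth reparametrization that preserves the $\mathcal K_a$-type condition up to constants, since $|\theta\cdot\xi|$ transforms into $|\theta'\cdot\xi'|$ times a factor comparable to $1$ on $\operatorname{supp}\beta$. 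It therefore suffices to treat $l_3=3$, i.e.\ $f_1,f_2$ frequency localized at $|\xi|\sim\lambda$ and $f_3$ at $|\xi|\le 2\lambda$.

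Following the local Fourier series idea of \cite{bhojak2025alternate}, we partition $\mathbb R$ into intervals $I$ of length $\sim 1$ and expand $f_3\mathbf 1_{\tilde I}$ (on a slightly enlarged interval) in a Fourier series. Alternatively, we expand $f_1,f_2$ on $I$ in $e^{2\pi i m x}$ with $|m|\lesssim\lambda$; the coefficients are $\ell^2$-controlled by the local $L^2$ norms. Since $K^0$ is supported in $|y|\le 2$, the form localizes to $\sum_I$. On each piece the $y$-integral becomes $\widehat{K^0}(m_1,m_2)$ evaluated at frequencies with $|(m_1,m_2)|\sim\lambda$ (because $f_1$ and $f_2$ both have frequency $\sim\lambda$). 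The key pointwise estimate is then
\[
|\widehat{K^0}(\eta)|\lesssim |\eta|^{-a}\|\Omega\|_{\mathcal K_a},\qquad |\eta|\ge1.
\]
To obtain it, we write the kernel in polar coordinates: $\widehat{K^0}(\eta)=\int_{\mathbb S^1}\Omega(\theta)\int \beta(r)r^{-1}e^{-2\pi i r\theta\cdot\eta}\,dr\,d\sigma(\theta)$. The inner integral is $O(\min\{1,|\theta\cdot\eta|^{-N}\})\le |\theta\cdot \eta|^{-a}=|\eta|^{-a}|\theta\cdot\eta/|\eta||^{-a}$, and condition~\eqref{alpha} closes the estimate.

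Summing, we use Cauchy--Schwarz in $(m_1,m_2)$ together with $\|f_3\|_{L^\infty}$. The number of relevant frequency pairs per unit interval is $O(\lambda^2)$, while the $\ell^2$ norms of the coefficients of $f_1,f_2$ contribute only local $L^2$ norms. Since we cannot afford $\lambda^{2}$ loss, the counting step is the real obstacle. We would avoid the crude count by Plancherel/Schur instead. The form equals $\int\!\!\int \widehat f_1(\xi_1)\widehat f_2(\xi_2)\widehat{f_3}(-\xi_1-\xi_2)\widehat{K^0}(\xi_1,\xi_2)$, which we bound by
\[
\|f_3\|_\infty\cdot\Big\|\int\widehat{K^0}(\xi_1,\xi_2)\cdots\Big\|.
\]
More concretely, we use the $L^2\times L^2\to L^1$ bound for the bilinear multiplier $m=\widehat{K^0}$ restricted to $|\xi_i|\sim\lambda$. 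By a local Fourier expansion in both variables at scale $1$, this reduces to a Schur-type bound $\sup_{\xi_1}\int|m(\xi_1,\xi_2)|\,d\xi_2$ paired with the $L^2$ norms, plus an $L^\infty$ factor for $f_3$ from the unit-scale localization. The pointwise bound gives
\[
\int_{|\xi_2|\le2\lambda}|m|\,d\xi_2\lesssim\lambda^{1-a},
\]
which is not yet decaying. We therefore interpolate with the trivial $L^2$ bound $\|m\|_{L^2}\lesssim\|\Omega\|_{L^1}$ plus the $L^2$-integrated decay $\int_{|\eta|\sim\lambda}|m|^2\lesssim\lambda^{2-2a}\|\Omega\|\cdot(\ldots)$. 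The hypothesis $a>1/2$ is exactly what makes $|m|^2\lesssim|\eta|^{-2a}$ integrate along lines with decay $\lambda^{1-2a}$. Hence the bound $\lambda^{-c}$ with $c=a-\tfrac12$ via the Hilbert--Schmidt norm of the line-restricted multiplier, and it is the step we expect to be hardest.
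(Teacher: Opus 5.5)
\textbf{There is a genuine gap: the proposal never gets from $\|f_3\|_{L^2}$ to the required $\|f_3\|_{L^\infty}$.} Your decay mechanism is right, though, and it is a genuinely different route from the paper's.

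\emph{How the routes differ.} The paper expands the localized inputs in Fourier series and kills the non-stationary sets $S_1,S_2^+$ by integration by parts. It uses condition~\eqref{alpha} only through the cap estimate \eqref{eqsmallcap} on the stationary set $S_2^-$, paying $\lambda^{(1+\varepsilon_1)/2+\varepsilon_2}$ for the count. You instead use \eqref{alpha} to get the pointwise bound $|\widehat{K^0}(\eta)|\lesssim|\eta|^{-a}\|\Omega\|_{\mathcal K_a}$, and your derivation of it is correct. The threshold $a>1/2$ then appears as square-integrability of $|\eta|^{-a}$ along lines.

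\emph{Two intermediate claims are wrong.}
\begin{enumerate}
\item The ``trivial'' bound $\|m\|_{L^2}\lesssim\|\Omega\|_{L^1}$ is false. By Plancherel, $\|m\|_{L^2}=\|K^0\|_{L^2}\approx\|\Omega\|_{L^2(\mathbb S^1)}$, which is infinite for the function of Proposition~\ref{propforward_1d_orlicz}. Only $\|m\|_{L^\infty}\le\|K^0\|_{L^1}$ is trivial.
\item The Schur quantity $\sup_{\xi_1}\int|m|\,d\xi_2$ does not control a trilinear form with three $L^2$ inputs. No interpolation is needed: joint Cauchy--Schwarz in $(\xi_1,\xi_2)$ gives
\[
\iint|m(\xi_1,\xi_2)|\,|\widehat g_1(\xi_1)|\,|\widehat g_2(\xi_2)|\,|\widehat g_3(-\xi_1-\xi_2)|\,d\xi_1\,d\xi_2\le\sup_{\xi_2}\|m(\cdot,\xi_2)\|_{L^2}\,\|g_1\|_2\|g_2\|_2\|g_3\|_2 .
\]
If one of $\widehat g_1,\widehat g_2$ lives in $|\xi|\ge 3\lambda/4$, you may restrict $m$ to $|\eta|\gtrsim\lambda$. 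There $\sup_{\xi_2}\int|\eta|^{-2a}\mathbf 1_{|\eta|\gtrsim\lambda}\,d\xi_1\lesssim\lambda^{1-2a}$, since $2a>1$. This yields $\lambda^{1/2-a}\|\Omega\|_{\mathcal K_a}\prod_l\|g_l\|_2$. So this is the easy step, not the hardest one.
\end{enumerate}

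\emph{The real gap.} Some spatial localization is unavoidable to reach $\|f_3\|_{L^\infty}$. But cutting $f_1$ off in space destroys the Fourier support on which the decay depends. The low-frequency leakage of $\widetilde\eta_m f_1$ reaches the region near $\eta=0$, where $|\widehat{K^0}|$ is only $O(\|\Omega\|_{L^1})$ and there is no decay in $\lambda$. Your phrase ``plus an $L^\infty$ factor for $f_3$ from the unit-scale localization'' skips exactly this. It is what the paper's terms $I_2,I_3,I_4$ handle, via the commutator bound $|\widetilde\eta_m(f*\psi_\lambda)-(\widetilde\eta_m f)*\psi_\lambda|\lesssim\lambda^{-1}|f|*\Psi_\lambda$.

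\emph{How to complete your route.}
\begin{enumerate}
\item Write $\langle \mathcal T^0_\Omega(f_1,f_2),f_3\rangle=\sum_m\langle \mathcal T^0_\Omega(\widetilde\eta_m f_1,\widetilde\eta_m f_2),\eta_m f_3\rangle$. This is exact, since $\operatorname{supp}K^0\subset\{|y|<2\}$ and $\widetilde\eta\equiv1$ on $[-4,4]$.
\item Write $\widetilde\eta_m f_1=\widetilde\eta_m(f_1*\psi_\lambda)=(\widetilde\eta_m f_1)*\psi_\lambda+E_m$, where $E_m$ is the commutator. The first piece is exactly frequency-localized in $|\xi|\ge 3\lambda/4$.
\item Apply the $L^2$ bound above to the main piece, using $\|\eta_m f_3\|_2\lesssim\|f_3\|_\infty$.
\item Bound the $E_m$ piece crudely by $\|K^0\|_{L^1}\|E_m\|_{L^2(m-3,m+3)}\|\widetilde\eta_m f_2\|_2\|f_3\|_\infty$.
\item Sum in $m$ by Cauchy--Schwarz.
\end{enumerate}

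\emph{What each approach buys.} Completed this way, your argument gives $c=a-\frac12$ with no $\varepsilon$-losses. It needs only one frequency-localized $L^2$ input, so the permutations require no change of variables. It also bypasses the Fourier-series tail bounds and Lemma~\ref{lemGN_bound}. The paper's discrete argument, by contrast, tracks \cite{bhojak2025alternate} closely enough that it can cite their lemmas directly.
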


\begin{proof}
	By symmetry, it suffices to treat the configuration
	\[
	\operatorname{supp}\widehat f_1\cup \operatorname{supp}\widehat f_2
	\subset \{\lambda\le |\xi|\le 2\lambda\},
	\qquad
	\operatorname{supp}\widehat f_3\subset \{|\xi|\le 2\lambda\}.
	\]
	As in Section~4 of \cite{bhojak2025alternate}, choose even Schwartz functions $\phi,\psi$ such that
	\[
	\widehat\phi(\xi)=1 \quad \text{for } |\xi|\le \frac{17}{8},
	\qquad
	\operatorname{supp}\widehat\phi\subset \Bigl\{|\xi|\le \frac94\Bigr\},
	\]
	\[
	\widehat\psi(\xi)=1 \quad \text{for } \frac78\le |\xi|\le \frac{17}{8},
	\qquad
	\operatorname{supp}\widehat\psi\subset \Bigl\{\frac34\le |\xi|\le \frac94\Bigr\},
	\]
	and define $\phi_\lambda(x)=\lambda\phi(\lambda x)$, $\psi_\lambda(x)=\lambda\psi(\lambda x)$.
	Then
	\begin{equation}\label{eqfreq-reduction}
		\langle \mathcal {T}_\Omega^0(f_1,f_2),f_3\rangle
		=
		\langle \mathcal {T}_\Omega^0(f_1*\psi_\lambda,\;f_2*\psi_\lambda),\;f_3*\phi_\lambda\rangle
	\end{equation}
	by the Fourier support assumptions.
	
	Following \cite[Section~4.1]{bhojak2025alternate}, we introduce a partition of
	unity $\sum_{m\in\mathbb Z}\eta(x-m)=1$ with $\eta\in C_c^\infty((-1,1))$,
	and an auxiliary cutoff $\widetilde\eta\in C_c^\infty(\mathbb R)$ with
	$\widetilde\eta\equiv 1$ on $[-4,4]$ and $\operatorname{supp}\widetilde\eta
	\subset[-5,5]$. Writing $\eta_m=\eta(\cdot-m)$ and
	$\widetilde\eta_m=\widetilde\eta(\cdot-m)$, the standard localization gives
	\[
	\big|\langle \mathcal{T}_\Omega^0(f_1*\psi_\lambda,f_2*\psi_\lambda),
	f_3*\phi_\lambda\rangle\big|
	\le I_1+I_2+I_3+I_4,
	\]
	where $I_1$, $I_2$, $I_3$ are defined exactly as in
	\cite[Section~4.1]{bhojak2025alternate}, and $I_4$ captures the
	double-commutator term in which both $f_1$ and $f_2$ contribute an error
	\begin{align*}
		I_4 &= \sum_{m\in\mathbb{Z}} \bigg|\bigg\langle T_{\Omega}^0\Big(
		\widetilde{\eta}_{m}^2\big(\widetilde{\eta}_{m}(f_1\ast\psi_{\lambda})
		-(\widetilde{\eta}_{m}f_1)\ast\psi_{\lambda}\big),\;
		\widetilde{\eta}_{m}^2\big(\widetilde{\eta}_{m}(f_2\ast\psi_{\lambda})
		-(\widetilde{\eta}_{m}f_2)\ast\psi_{\lambda}\big)\Big),
		\eta_m(f_3\ast\phi_{\lambda})\bigg\rangle\bigg|.
	\end{align*}
	
	By the mean-value-theorem argument of \cite[Section~4.1]{bhojak2025alternate},
	and using $\|K^0\|_{L^1(\mathbb R^2)}\lesssim\|\Omega\|_{L^1(\mathbb S^1)}
	\le\|\Omega\|_{\mathcal K_a}$ (since $|\theta\cdot\xi|\le 1$ on $\mathbb S^1$),
	one has
	\begin{equation}\label{eqI23-final}
		I_2+I_3
		\lesssim
		\lambda^{-1}\|\Omega\|_{\mathcal K_a}\|f_1\|_2\|f_2\|_2\|f_3\|_\infty.
	\end{equation}
	
	For $I_4$, applying the same pointwise bound
	$|\widetilde\eta_m(f_l*\psi_\lambda)-(\widetilde\eta_m f_l)*\psi_\lambda|
	\lesssim\lambda^{-1}(|f_l|*\Psi_\lambda)$ (with $\Psi(x)=|x||\psi(x)|$)
	simultaneously to both variables $l=1,2$, and then arguing exactly as
	in \cite[Section~4.1]{bhojak2025alternate}, we obtain a double decay:
	\begin{equation}\label{eqI4}
		I_4
		\lesssim
		\lambda^{-2}\|K^0\|_{L^1(\mathbb R^2)}\|f_1\|_2\|f_2\|_2\|f_3\|_\infty
		\lesssim
		\lambda^{-1}\|\Omega\|_{\mathcal K_a}\|f_1\|_2\|f_2\|_2\|f_3\|_\infty.
	\end{equation}

	It remains to bound $I_1$. As in \cite[Section~4.1]{bhojak2025alternate},
	by Cauchy--Schwarz in $m$ and translation invariance, it suffices to
	establish the uniform local bound for $m=0$:
	\begin{equation}\label{eqlocal-goal}
		\Big|\Big\langle \mathcal{T}_\Omega^0\bigl(
		\widetilde\eta^2((\widetilde\eta f_1)*\psi_\lambda),\,
		\widetilde\eta^2((\widetilde\eta f_2)*\psi_\lambda)\bigr),
		\eta(f_3*\phi_\lambda)\Big\rangle\Big|
		\lesssim
		\lambda^{-c}\|\Omega\|_{\mathcal K_a}
		\|\widetilde\eta f_1\|_2\|\widetilde\eta f_2\|_2\|f_3\|_\infty,
	\end{equation}
	which is the content of Lemma~\ref{lemlocal_oscillatory} below.
	Applying it and summing over $m$ gives
	\[
	I_1\lesssim\lambda^{-c}\|\Omega\|_{\mathcal K_a}
	\|f_1\|_2\|f_2\|_2\|f_3\|_\infty.
	\]
	Combined with \eqref{eqI23-final} and \eqref{eqI4}, this proves
	\eqref{eqT12-Ka} in the present frequency configuration. The remaining
	two configurations follow by permuting the roles of $f_1,f_2,f_3$ in
	Lemma~\ref{lemlocal_oscillatory}. This completes the proof.
\end{proof}

\begin{remark}[Scale invariance and uniform decay]\label{rem_scale_inv}
	While Proposition \ref{prop_decay_strong} is stated for the normalized local operator $\mathcal {T}_\Omega^0$, a standard rescaling argument $x \mapsto 2^i x$ demonstrates that analogous bounds hold for $\mathcal {T}_\Omega^i$ uniformly in $i \in \mathbb{Z}$. Under this spatial dilation, an input function with Fourier support in an annulus of radius $\sim 2^{i+k}$ is transformed into a function with Fourier support at radius $\sim 2^k$. Consequently, when applying this proposition to the Littlewood-Paley pieces, we set the frequency parameter to $\lambda \sim 2^k$, which yields a uniform decay rate of $2^{-ck}$ entirely independent of the spatial scale $i$.
\end{remark}

\begin{lemma}\label{lemGN_bound}
	Let $\widetilde\eta \in C_c^\infty(\mathbb R)$ and let $\psi_\lambda(x) = \lambda \psi(\lambda x)$ where $\psi \in \mathcal{S}(\mathbb R)$. For any function $f \in L^2(\mathbb R)$ and $\lambda \ge 1$, define the localized function
	\[
	f_\lambda(x) = \widetilde\eta(x) \big( (\widetilde\eta f) * \psi_\lambda \big)(x).
	\]
	Then we have the uniform bound
	\[
	\|f_\lambda\|_\infty \lesssim \lambda^{1/2} \|\widetilde\eta f\|_2.
	\]
\end{lemma}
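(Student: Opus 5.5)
The bound follows from Cauchy--Schwarz (equivalently, Young's inequality) for a single convolution, with the cutoff $\widetilde\eta$ contributing only a constant. Set $g=\widetilde\eta f\in L^2(\mathbb R)$. Pointwise,
\[
|f_\lambda(x)|\le \|\widetilde\eta\|_\infty\,|(g*\psi_\lambda)(x)|,
\]
so it suffices to show $\|g*\psi_\lambda\|_\infty\lesssim\lambda^{1/2}\|g\|_2$ with a constant depending only on $\psi$.

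For each fixed $x$, Cauchy--Schwarz gives
\[
|(g*\psi_\lambda)(x)|=\Big|\int g(y)\psi_\lambda(x-y)\,dy\Big|\le \|g\|_2\,\|\psi_\lambda\|_2 .
\]
The $L^2$ norm of the dilate is computed directly:
\[
\|\psi_\lambda\|_2^2=\int\lambda^2|\psi(\lambda z)|^2\,dz=\lambda\|\psi\|_2^2 .
\]
Hence $\|\psi_\lambda\|_2=\lambda^{1/2}\|\psi\|_2$, which is finite because $\psi\in\mathcal S(\mathbb R)$.

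Combining these estimates yields
\[
\|f_\lambda\|_\infty\le\|\widetilde\eta\|_\infty\|\psi\|_2\,\lambda^{1/2}\|\widetilde\eta f\|_2 ,
\]
which is the claimed bound. The constant is uniform in $\lambda\ge1$. Neither the restriction $\lambda\ge1$ nor the compact support of $\widetilde\eta$ is actually needed: boundedness of $\widetilde\eta$ suffices.

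There is no real obstacle here. The only point to check is the scaling exponent. The normalization $\psi_\lambda=\lambda\psi(\lambda\cdot)$ preserves the $L^1$ norm but scales the $L^2$ norm by $\lambda^{1/2}$, and this is exactly the loss of $\lambda^{1/2}$ from $L^2$ to $L^\infty$ predicted by Bernstein's inequality for functions with frequency support in a set of measure $\sim\lambda$.
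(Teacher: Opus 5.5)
Your proof is correct, but it takes a different and simpler route than the paper's.

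\textbf{The paper's route.} It does not estimate the convolution pointwise. It applies the one-dimensional Gagliardo--Nirenberg inequality $\|f_\lambda\|_\infty\le\sqrt2\,\|f_\lambda\|_2^{1/2}\|f_\lambda'\|_2^{1/2}$. It bounds $\|f_\lambda\|_2\lesssim\|\widetilde\eta f\|_2$ by Young's inequality with $\|\psi_\lambda\|_1=\|\psi\|_1$. It bounds $\|f_\lambda'\|_2\lesssim\lambda\|\widetilde\eta f\|_2$ using the product rule and $(\psi_\lambda)'=\lambda(\psi')_\lambda$. So the factor $\lambda^{1/2}$ arises as the geometric mean of an $L^2$ bound and a derivative bound that costs $\lambda$. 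This is a Bernstein-type mechanism, matching the heuristic you mention at the end.

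That route needs more hypotheses than yours:
\begin{itemize}
\item $\widetilde\eta$ must be differentiable with $\widetilde\eta'$ bounded.
\item $\psi'$ must lie in $L^1$.
\item $\lambda\ge1$ is needed to absorb the term $\widetilde\eta'\,\bigl((\widetilde\eta f)*\psi_\lambda\bigr)$ into the $O(\lambda)$ term.
\end{itemize}
Its constant depends on $\|\widetilde\eta\|_{W^{1,\infty}}$ and $\|\psi\|_1+\|\psi'\|_1$.

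\textbf{Your route.} The Cauchy--Schwarz bound $|(g*\psi_\lambda)(x)|\le\|g\|_2\|\psi_\lambda\|_2=\lambda^{1/2}\|\psi\|_2\|g\|_2$ gets the same exponent directly from the $L^2$ scaling of the dilated kernel. The constant is explicit, $\|\widetilde\eta\|_\infty\|\psi\|_2$. The bound holds for every $\lambda>0$ and needs only $\widetilde\eta\in L^\infty$, so your remark that compact support (and in fact smoothness) of $\widetilde\eta$ and the restriction $\lambda\ge1$ are superfluous is accurate.

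For the lemma as stated, your argument is the more economical one. The paper's derivative-based version is only preferable as an illustration of the frequency-localization heuristic.
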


\begin{proof}
	Since $f_\lambda$ is compactly supported, the one-dimensional Gagliardo--Nirenberg inequality yields
	\[
	\|f_\lambda\|_\infty \le \sqrt{2} \|f_\lambda\|_2^{1/2} \|f_\lambda'\|_2^{1/2}.
	\]
	By Young's convolution inequality, the base $L^2$ norm satisfies $\|f_\lambda\|_2 \lesssim \|\widetilde\eta f\|_2$.
	For the derivative, the product rule gives
	\[
	f_\lambda'(x) = \widetilde\eta'(x) \big( (\widetilde\eta f) * \psi_\lambda \big)(x) + \widetilde\eta(x) \big( (\widetilde\eta f) * (\psi_\lambda)' \big)(x).
	\]
	Observe that $(\psi_\lambda)'(x) = \frac{d}{dx}[\lambda \psi(\lambda x)] = \lambda^2 \psi'(\lambda x) = \lambda (\psi')_\lambda(x)$.
	Since $\psi$ is a Schwartz function, $\|\psi'\|_1 \lesssim 1$. Applying Young's inequality to both terms, and using $\lambda \ge 1$, we deduce
	\[
	\|f_\lambda'\|_2 \le \|\widetilde\eta'\|_\infty \|\widetilde\eta f\|_2 + \|\widetilde\eta\|_\infty \lambda \|
	(\widetilde\eta f) * (\psi')_\lambda \|_2 \lesssim \lambda \|\widetilde\eta f\|_2.
	\]
	Substituting these $L^2$ bounds back into the Gagliardo--Nirenberg inequality, we strictly obtain
	\[
	\|f_\lambda\|_\infty \lesssim \big( \|\widetilde\eta f\|_2 \big)^{1/2} \big( \lambda \|\widetilde\eta f\|_2 \big)^{1/2} = \lambda^{1/2} \|\widetilde\eta f\|_2.
	\]
	This completes the proof.
\end{proof}

\begin{lemma}\label{lemlocal_oscillatory}
	Let $1/2 < a < 1$ and suppose that $\Omega$ satisfies condition~\eqref{alpha}. Let $\mathcal {T}_\Omega^0$ be the local operator, and let the spatial cutoff functions $\eta, \widetilde{\eta} \in C_c^\infty(\mathbb R)$ as well as the frequency localization functions $\psi_\lambda, \phi_\lambda$ be exactly as defined in the proof of Proposition \ref{prop_decay_strong}. Then there exists a constant $c > 0$ such that for all $\lambda \ge 1$,
	\[
	\Big| \Big\langle \mathcal {T}_\Omega^0\bigl( \widetilde\eta^2((\widetilde\eta f_1)*\psi_\lambda), \widetilde\eta^2((\widetilde\eta f_2)*\psi_\lambda) \bigr), \eta(f_3*\phi_\lambda) \Big\rangle \Big| \lesssim \lambda^{-c}\|\Omega\|_{\mathcal K_a}\|\widetilde\eta f_1\|_2\|\widetilde\eta f_2\|_2\|f_3\|_\infty.
	\]
\end{lemma}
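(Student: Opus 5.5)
We expand the localized inputs in Fourier series, use the $\mathcal K_a$ condition to bound the resulting angular oscillatory integrals, and interpolate against a trivial bound.

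\textbf{Step 1: Fourier series.} Write $g_l=\widetilde\eta^2((\widetilde\eta f_l)*\psi_\lambda)$ for $l=1,2$; these are supported in $[-5,5]$. Restrict them to a periodic box $[-L,L]$ with $L$ fixed, say $L=10$. Since $K^0$ is supported in $|y|\le 2$ and $\eta$ is supported in $(-1,1)$, the arguments $x-y_l$ stay inside the box. On that box,
\[
g_l(x)=\sum_{n\in\mathbb Z}\widehat{g_l}(n)\,e^{i\pi n x/L},
\]
with $\sum_n|\widehat{g_l}(n)|^2\lesssim\|\widetilde\eta f_l\|_2^2$.

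Because $\widehat{(\widetilde\eta f_l)*\psi_\lambda}$ lives where $|\xi|\sim\lambda$, and multiplication by $\widetilde\eta^2$ only smears frequencies by $O(1)$, the coefficients with $|n|\notin[c_1\lambda,c_2\lambda]$ are rapidly decaying. Concretely, their total contribution is $O(\lambda^{-N}\|\widetilde\eta f_l\|_2)$, via integration by parts against the smooth $\widetilde\eta^2$. That tail is bounded trivially using $\|K^0\|_1\lesssim\|\Omega\|_{\mathcal K_a}$, Lemma~\ref{lemGN_bound}, and $\|\eta(f_3*\phi_\lambda)\|_1\lesssim\|f_3\|_\infty$.

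\textbf{Step 2: reduction to a multiplier.} Plugging in the main terms, the form becomes
\[
\sum_{n_1,n_2}\widehat{g_1}(n_1)\,\widehat{g_2}(n_2)\,\widehat{K^0}\big(\tfrac{\pi}{L}(n_1,n_2)\big)\int e^{i\pi(n_1+n_2)x/L}\,\eta(x)\,(f_3*\phi_\lambda)(x)\,dx .
\]
The $x$-integral is bounded by $\|f_3\|_\infty$. The key input is a decay estimate on $\widehat{K^0}$ at $|\zeta|\sim\lambda$. Writing
\[
\widehat{K^0}(\zeta)=\int_{\mathbb S^1}\Omega(\theta)\int_0^\infty \beta(r)\,r^{-1}e^{-ir\theta\cdot\zeta}\,dr\,d\sigma(\theta),
\]
the inner integral is $O\big(\min(1,|\theta\cdot\zeta|^{-1})\big)$, and in fact $O_N(|\theta\cdot\zeta|^{-N})$. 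Using $\min(1,t^{-1})\le t^{-a}$ together with \eqref{alpha} gives
\[
|\widehat{K^0}(\zeta)|\lesssim|\zeta|^{-a}\|\Omega\|_{\mathcal K_a}.
\]
No cancellation of $\Omega$ is needed here, since $\beta$ vanishes near $0$.

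\textbf{Step 3: summation (main obstacle).} Summing the bound directly over $(n_1,n_2)$ with absolute values loses: Cauchy--Schwarz gives only $\sum_{|n|\sim\lambda}|\widehat{g}(n)|\lesssim\lambda^{1/2}\|g\|_2$. The resulting estimate is
\[
\lambda^{-a}\cdot\lambda^{1/2}\cdot\lambda^{1/2}=\lambda^{1-a},
\]
which does not decay. To recover decay, exploit the $x$-integral. Since $\eta(f_3*\phi_\lambda)$ has frequencies $\lesssim\lambda$, only $|n_1+n_2|\lesssim\lambda$ contributes significantly, but that alone is not enough.

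Instead, for fixed $m=n_1+n_2$, Schur/Cauchy--Schwarz in $n_1$ gives
\[
\sum_{n_1}|\widehat{g_1}(n_1)|\,|\widehat{g_2}(m-n_1)|\le\|g_1\|_2\|g_2\|_2 .
\]
Then sum over the $\sim\lambda$ values of $m$, weighted by $|c_m|$, the Fourier coefficients of $\eta(f_3*\phi_\lambda)$. These satisfy $\sum_m|c_m|^2\lesssim\|f_3\|_\infty^2$, hence $\sum_{|m|\lesssim\lambda}|c_m|\lesssim\lambda^{1/2}\|f_3\|_\infty$. This yields the bound $\lambda^{1/2-a}$, which decays precisely because $a>1/2$, with $c=a-\tfrac12$. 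This is where the hypothesis $a>1/2$ enters.

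Lemma~\ref{lemGN_bound} ($\|g_l\|_\infty\lesssim\lambda^{1/2}\|\widetilde\eta f_l\|_2$) supplies the alternative trivial bound $\lambda^{1/2}$, to be used in the other permuted configurations where $f_3$ is in $L^2$. In those cases the same scheme runs with the $L^2$ and $L^\infty$ roles exchanged. The delicate point is justifying the $c_m$ summation and the Fourier-coefficient tail estimates uniformly; I expect that to be the main technical work.
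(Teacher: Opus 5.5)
Your argument is correct, and it reaches the estimate by a more direct route than the paper.

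\textbf{The paper's route.} The paper expands all three localized functions $f_{1,\lambda},f_{2,\lambda},f_{3,\lambda}$. Each coefficient integral $I_{\vec k}$ therefore carries both an $x$-phase $\tfrac1{10}(k_1+k_2-k_3)x$ and an $r$-phase $\tfrac{r}{10}(k_1,k_2)\cdot\theta$. The paper then splits the main sum into three pieces:
\begin{itemize}
\item $S_1$, handled by integration by parts in $x$;
\item $S_2^+$, handled by integration by parts in $r$;
\item the stationary piece $S_2^-$, where \eqref{alpha} is used only through the small-cap bound $\int_{|\omega\cdot\theta|\le\delta}|\Omega|\,d\sigma\le\delta^a\|\Omega\|_{\mathcal K_a}$ at $\delta\sim\lambda^{-1+\varepsilon_2}$. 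The sum over the shift $\nu=k_3-k_1-k_2$ costs an extra $\lambda^{\varepsilon_2}$.
\end{itemize}

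\textbf{Your route.} You expand only the two inputs, so the form becomes exactly $\sum\widehat{g_1}(n_1)\widehat{g_2}(n_2)\widehat{K^0}(\tfrac{\pi}{L}(n_1,n_2))\,c_{n_1+n_2}$. The $x$-integration is absorbed exactly into the constraint $m=n_1+n_2$, so there is no $S_1$ and no $\nu$-sum. The $r$-integration by parts and the small-cap estimate collapse into the single multiplier bound $|\widehat{K^0}(\zeta)|\lesssim|\zeta|^{-a}\|\Omega\|_{\mathcal K_a}$, obtained from $\min(1,t^{-1})\le t^{-a}$.

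\textbf{What each buys.} The bookkeeping is the same in both proofs: $\lambda^{1/2}$ is lost in one $\ell^1\to\ell^2$ passage and $\lambda^{-a}$ is gained from \eqref{alpha}, which is exactly why $a>1/2$ is needed. Your version gives the explicit exponent $c=a-\tfrac12$, without the $\varepsilon_1,\varepsilon_2$ losses and without $\ell^1$ tail bounds for the coefficients of $f_3$. The paper's symmetric expansion of all three functions follows the template of \cite{bhojak2025alternate} verbatim and makes the permuted frequency configurations formally identical.

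\textbf{Minor remarks.} The step you flag as delicate is not. Once $|n_1|,|n_2|\le c_2\lambda$, automatically $|m|\le 2c_2\lambda$, so $\sum_m|c_m|\lesssim\lambda^{1/2}\|f_3\|_\infty$ follows from Parseval for $\eta(f_3*\phi_\lambda)$ alone, with no band-limitation argument. Lemma~\ref{lemGN_bound} is needed only in your tail estimate. No interpolation is actually used, despite your opening sentence.
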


\begin{proof}
	We follow the Fourier-series expansion from Section~4.3 of \cite{bhojak2025alternate}, but we do \emph{not} pass through the auxiliary $(\infty,\infty,\infty)$ inequality from their Section~4.2.
	Define
	\[
	f_{l,\lambda}(x)=
	\begin{cases}
		\widetilde\eta(x)\bigl((\widetilde\eta f_l)*\psi_\lambda\bigr)(x), & l=1,2,\\[1mm]
		\eta(x)(f_3*\phi_\lambda)(x), & l=3.
	\end{cases}
	\]
	As in \cite[Section~4.3]{bhojak2025alternate}, expanding each $f_{l,\lambda}$ 
	into a Fourier series $f_{l,\lambda}(x)=\sum_{k_l\in\mathbb Z}a_{l,k_l}
	e^{-2\pi i\frac{k_l}{10}x}$ and substituting gives
	\[
	\Big\langle \mathcal{T}_\Omega^0\bigl(\widetilde\eta^2((\widetilde\eta f_1)*\psi_\lambda),
	\widetilde\eta^2((\widetilde\eta f_2)*\psi_\lambda)\bigr),\eta(f_3*\phi_\lambda)\Big\rangle
	= \sum_{\vec k\in\mathbb Z^3} a_{1,k_1}a_{2,k_2}a_{3,k_3}I_{\vec k},
	\]
	where, in polar coordinates $(y_1,y_2)=(r\theta_1,r\theta_2)$,
	\[
	I_{\vec k} = \int_{\mathbb S^1}\Omega(\theta)\int_{\mathbb R}\int_{1/2}^{2}
	\zeta_\theta(x,r)\,e^{-2\pi i P_{\vec k,\theta}(x,r)}\,dr\,dx\,d\sigma(\theta),
	\]
	with phase $P_{\vec k,\theta}(x,r)=\tfrac{1}{10}(k_1+k_2-k_3)x
	-\tfrac{r}{10}(k_1,k_2)\cdot\theta$ and amplitude
	$\zeta_\theta(x,r)=r^{-1}\beta(r)\widetilde\eta(x-r\theta_1)\widetilde\eta(x-r\theta_2)$.
	
	By Bessel's inequality, we establish the base $\ell^2(\mathbb Z)$ bounds
	\[
	\|a_{l, \cdot} \|_{\ell^2(\mathbb Z)} \lesssim \|f_{l,\lambda}\|_{L^2(\mathbb R)}, \qquad l=1,2,3.
	\]
	Substituting the localized functions yields
	\begin{align}
		\|a_{l, \cdot} \|_{\ell^2(\mathbb Z)} &\lesssim \|\widetilde\eta f_l\|_2, \qquad l=1,2, \label{eq-l2-12} \\
		\|a_{3, \cdot} \|_{\ell^2(\mathbb Z)} &\lesssim \|f_3\|_\infty. \label{eq-l2-3}
	\end{align}
	
	For $l=1,2$, we apply Lemma \ref{lemGN_bound} to bound the localized $L^\infty$ norms. For $l=3$, since $\|\phi_\lambda\|_{L^1} = \|\phi\|_{L^1} \lesssim 1$, Young's convolution inequality directly yields the bound without any $\lambda$ growth
	\begin{align*}
		\|f_{l,\lambda}\|_\infty &\lesssim \lambda^{1/2}\|\widetilde\eta f_l\|_2, \qquad l=1,2, \\
		\|f_{3,\lambda}\|_\infty &\le \|\eta\|_\infty \|f_3 * \phi_\lambda\|_\infty \lesssim \|\phi\|_{L^1} \|f_3\|_\infty \lesssim \|f_3\|_\infty.
	\end{align*}
	
	By \cite[Lemma~4.1]{bhojak2025alternate} applied to the $L^\infty$ bounds above:
	\begin{align}
		\|a_{l, \cdot} \|_{\ell^1(|k_l| > \lambda^{1+\varepsilon_1})} &\lesssim \lambda^{-100}\|f_{l,\lambda}\|_\infty \lesssim \lambda^{-99.5}\|\widetilde\eta f_l\|_2, \qquad l = 1, 2, \label{eq-high-l1} \\
		\|a_{3, \cdot} \|_{\ell^1(|k_3| > \lambda^{1+\varepsilon_1})} &\lesssim \lambda^{-100}\|f_{3,\lambda}\|_\infty \lesssim \lambda^{-100}\|f_3\|_\infty, \label{eq-high-l3} \\
		\|a_{l, \cdot} \|_{\ell^2(|k_l| \le \frac{3}{8}\lambda)} &\lesssim \lambda^{-100}\|f_{l,\lambda}\|_\infty \lesssim \lambda^{-99.5}\|\widetilde\eta f_l\|_2, \qquad l = 1, 2. \label{eq-low-l2}
	\end{align}
	
	By the Cauchy-Schwarz inequality, the $\ell^2$ low-frequency tail \eqref{eq-low-l2} converts to an $\ell^1$ tail
	\begin{equation}\label{eq-low-l1}
		\|a_{l, \cdot} \|_{\ell^1(|k_l| \le \frac{3}{8}\lambda)} \le \Big(\sum_{|k_l| \le \frac{3}{8}\lambda} 1\Big)^{1/2} \|a_{l, \cdot} \|_{\ell^2(|k_l| \le \frac{3}{8}\lambda)} \lesssim \lambda^{1/2} \big( \lambda^{-99.5}\|\widetilde\eta f_l\|_2 \big) = \lambda^{-99}\|\widetilde\eta f_l\|_2.
	\end{equation}
	
	Similarly, combining Cauchy-Schwarz on the main frequency region with the high-frequency $\ell^1$ tail \eqref{eq-high-l1}, we deduce the global $\ell^1(\mathbb Z)$ bounds
	\begin{align}
		\|a_{l, \cdot}\|_{\ell^1(\mathbb{Z})} &\le \Big(\sum_{|k_l| \le \lambda^{1+\varepsilon_1}} 1\Big)^{1/2} \|a_{l,\cdot}\|_{\ell^2(\mathbb{Z})} + \|a_{l, \cdot}\|_{\ell^1(|k_l| > \lambda^{1+\varepsilon_1})} \nonumber \\
		&\lesssim \lambda^{\frac{1+\varepsilon_1}{2}} \|\widetilde\eta f_l\|_2 + \lambda^{-99.5}\|\widetilde\eta f_l\|_2 \lesssim \lambda^{\frac{1+\varepsilon_1}{2}} \|\widetilde\eta f_l\|_2, \qquad l=1,2. \label{eq-global-l1-12}
	\end{align}
	By identical steps utilizing \eqref{eq-l2-3} and \eqref{eq-high-l3}, we have
	\begin{equation}\label{eq-global-l1-3}
		\|a_{3, \cdot}\|_{\ell^1(\mathbb{Z})} \lesssim \lambda^{\frac{1+\varepsilon_1}{2}} \|f_3\|_\infty.
	\end{equation}
	
	We now decompose the total sum into a main term and a tail error. Let $S_{\text{main}} = \{ \vec{k} \in \mathbb{Z}^3 : \frac{3}{8}\lambda \le |k_1|, |k_2| \le \lambda^{1+\varepsilon_1}, |k_3| \le \lambda^{1+\varepsilon_1} \}$, then
	\begin{equation}\label{eqmain-plus-errors}
		\Big| \sum_{\vec{k} \in \mathbb{Z}^3} a_{1,k_1}a_{2,k_2}a_{3,k_3}I_{\vec k} \Big| \le \Big| \sum_{\vec{k} \in S_{\text{main}}} a_{1,k_1}a_{2,k_2}a_{3,k_3}I_{\vec k} \Big| + E_{\text{tail}},
	\end{equation}
	where $|I_{\vec k}| \le \|\Omega\|_{L^1(\mathbb S^1)} \le \|\Omega\|_{\mathcal K_a}$. The condition $\vec{k} \notin S_{\text{main}}$ implies at least one index $k_l$ belongs to its rapid-decay tail region $\mathcal{T}_l$, defined as
	\begin{align*}
		\mathcal{T}_l &= \{k_l \in \mathbb{Z} : |k_l| > \lambda^{1+\varepsilon_1} \text{ or } |k_l| < \frac{3}{8}\lambda\}, \qquad l=1,2, \\
		\mathcal{T}_3 &= \{k_3 \in \mathbb{Z} : |k_3| > \lambda^{1+\varepsilon_1}\}.
	\end{align*}
	Combining \eqref{eq-high-l1}, \eqref{eq-high-l3}, and \eqref{eq-low-l1}, the total $\ell^1$ sum over these tail regions is bounded by
	\begin{align}
		\|a_{l, \cdot}\|_{\ell^1(\mathcal{T}_l)} &\lesssim \lambda^{-99}\|\widetilde\eta f_l\|_2, \qquad l=1,2, \label{eq-tail-total-12} \\
		\|a_{3, \cdot}\|_{\ell^1(\mathcal{T}_3)} &\lesssim \lambda^{-100}\|f_3\|_\infty. \label{eq-tail-total-3}
	\end{align}
	
	By the union bound, $E_{\text{tail}}$ is strictly controlled by summing the products where exactly one index is restricted to $\mathcal{T}_l$, and the other two run freely over $\mathbb{Z}$:
	\begin{align*}
		E_{\text{tail}} &\le \|\Omega\|_{\mathcal K_a} \sum_{\vec{k} \notin S_{\text{main}}} \big|a_{1,k_1}\big| \big|a_{2,k_2}\big| \big|a_{3,k_3}\big| \\
		&\lesssim \|\Omega\|_{\mathcal K_a} \Big( \|a_{1, \cdot}\|_{\ell^1(\mathcal{T}_1)} \|a_{2, \cdot}\|_{\ell^1(\mathbb{Z})} \|a_{3, \cdot}\|_{\ell^1(\mathbb{Z})} \\
		&\quad + \|a_{1, \cdot}\|_{\ell^1(\mathbb{Z})} \|a_{2, \cdot}\|_{\ell^1(\mathcal{T}_2)} \|a_{3, \cdot}\|_{\ell^1(\mathbb{Z})} \\
		&\quad + \|a_{1, \cdot}\|_{\ell^1(\mathbb{Z})} \|a_{2, \cdot}\|_{\ell^1(\mathbb{Z})} \|a_{3, \cdot}\|_{\ell^1(\mathcal{T}_3)} \Big).
	\end{align*}
	
	Substituting \eqref{eq-tail-total-12}--\eqref{eq-global-l1-3} into each of 
	the three terms and using $-99 + \frac{1+\varepsilon_1}{2} + \frac{1+\varepsilon_1}{2} 
	< -97$, all three terms are $O(\lambda^{-97}\|\widetilde\eta f_1\|_2
	\|\widetilde\eta f_2\|_2\|f_3\|_\infty)$ (and similarly for permutations). 
	Hence
	\begin{equation}\label{eqA-terms}
		E_{\text{tail}} \lesssim \lambda^{-97} \|\Omega\|_{\mathcal K_a} \|\widetilde\eta f_1\|_2\|\widetilde\eta f_2\|_2\|f_3\|_\infty.
	\end{equation}
	
	It remains to estimate the main sum over $S_{\text{main}}$. Following 
	\cite[Section~4.4]{bhojak2025alternate}, we fix $0<\varepsilon_2<1$ and 
	split $S_{\text{main}}$ according to
	\begin{align*}
		S_1 :& \quad |k_1+k_2-k_3| > \lambda^{\varepsilon_2}, \\
		S_2^+ :& \quad |k_1+k_2-k_3| \le \lambda^{\varepsilon_2},\quad 
		|(k_1,k_2)\cdot\theta| > \lambda^{\varepsilon_2}, \\
		S_2^- :& \quad |k_1+k_2-k_3| \le \lambda^{\varepsilon_2},\quad 
		|(k_1,k_2)\cdot\theta| \le \lambda^{\varepsilon_2}.
	\end{align*}
	
	\textit{Estimates for $S_1$ and $S_2^+$.}
	For $S_1$, the $x$-phase satisfies $|\partial_x P_{\vec k,\theta}|=
	|k_1+k_2-k_3|>\lambda^{\varepsilon_2}$; $N$-fold integration by parts 
	in $x$ gives $|I_{\vec k}|\lesssim\lambda^{-\varepsilon_2N}\|\Omega\|_{L^1(\mathbb S^1)}$.
	Summing over $S_{\text{main}}$ via Cauchy--Schwarz as in 
	\cite[Section~4.4]{bhojak2025alternate} and using 
	$\|\Omega\|_{L^1(\mathbb S^1)}\le\|\Omega\|_{\mathcal K_a}$ yields,
	for $N$ large enough,
	\[
	|S_1| \lesssim \lambda^{-50}\|\Omega\|_{\mathcal K_a}
	\|\widetilde\eta f_1\|_2\|\widetilde\eta f_2\|_2\|f_3\|_\infty.
	\]
	For $S_2^+$, the $r$-phase satisfies $|\partial_r P_{\vec k,\theta}|=
	|(k_1,k_2)\cdot\theta|>\lambda^{\varepsilon_2}$; the identical argument 
	(integration by parts in $r$) gives
	\[
	|S_2^+| \lesssim \lambda^{-50}\|\Omega\|_{\mathcal K_a}
	\|\widetilde\eta f_1\|_2\|\widetilde\eta f_2\|_2\|f_3\|_\infty.
	\]
	Hence
	\begin{equation}\label{eqS1S2plus}
		|S_1|+|S_2^+|\lesssim\lambda^{-50}\|\Omega\|_{\mathcal K_a}
		\|\widetilde\eta f_1\|_2\|\widetilde\eta f_2\|_2\|f_3\|_\infty.
	\end{equation}
	
	For the non-oscillatory term $S_2^-$, the summation is restricted to $|k_1+k_2-k_3| \le \lambda^{\varepsilon_2}$ and the angular phase condition $|(k_1,k_2)\cdot \theta| \le \lambda^{\varepsilon_2}$. For $\vec{k} \in S_{\text{main}}$, we have $|k_1| \ge \frac{3}{8}\lambda$, which strictly implies $|(k_1,k_2)| \ge \frac{3}{8}\lambda$. Defining the unit vector $\omega = \frac{(k_1,k_2)}{|(k_1,k_2)|} \in \mathbb{S}^1$, the phase condition simplifies geometrically
	\[
	|\omega \cdot \theta| = \frac{|(k_1,k_2)\cdot \theta|}{|(k_1,k_2)|} \le \frac{\lambda^{\varepsilon_2}}{3/8 \lambda} \le 4\lambda^{-1+\varepsilon_2}.
	\]
	
	By taking the absolute value inside the integral, $|I_{\vec k}|$ is bounded by the measure of this singular region. Factoring out the supremum over all directions $\omega \in \mathbb{S}^1$ yields
	\[
	|S_2^-| \lesssim \Bigg( \sup_{\omega\in\mathbb S^1} \int_{|\omega\cdot \theta|\le 4\lambda^{-1+\varepsilon_2}} |\Omega(\theta)|\,d\sigma(\theta) \Bigg) \sum_{\substack{\vec{k} \in S_{\text{main}} \\ |k_1+k_2-k_3| \le \lambda^{\varepsilon_2}}} |a_{1,k_1}| |a_{2,k_2}| |a_{3,k_3}|.
	\]
	
	To bound the restricted coefficient sum, we introduce an integer shift $\nu = k_3 - k_1 - k_2$ satisfying $|\nu| \le \lambda^{\varepsilon_2}$. Since there are $O(\lambda^{\varepsilon_2})$ such integers, a two-fold application of the Cauchy-Schwarz inequality (first in $k_2$, then in $k_1$) establishes the bound
	\begin{align}
		\sum_{\substack{\vec{k} \in S_{\text{main}} \\ |k_1+k_2-k_3| \le \lambda^{\varepsilon_2}}} \prod_{l=1}^3 |a_{l,k_l}| 
		&\le \sum_{|k_1| \le \lambda^{1+\varepsilon_1}} |a_{1,k_1}| \sum_{|\nu| \le \lambda^{\varepsilon_2}} \sum_{|k_2| \le \lambda^{1+\varepsilon_1}} |a_{2,k_2}| |a_{3,k_1+k_2+\nu}| \nonumber \\
		&\le \sum_{|k_1| \le \lambda^{1+\varepsilon_1}} |a_{1,k_1}| \sum_{|\nu| \le \lambda^{\varepsilon_2}} \|a_{2,\cdot}\|_{\ell^2} \|a_{3,\cdot}\|_{\ell^2} \nonumber \\
		&\lesssim \lambda^{\varepsilon_2} \|a_{2,\cdot}\|_{\ell^2} \|a_{3,\cdot}\|_{\ell^2} \sum_{|k_1| \le \lambda^{1+\varepsilon_1}} |a_{1,k_1}| \nonumber \\
		&\le \lambda^{\varepsilon_2} \|a_{2,\cdot}\|_{\ell^2} \|a_{3,\cdot}\|_{\ell^2} \Big( \sum_{|k_1| \le \lambda^{1+\varepsilon_1}} 1 \Big)^{1/2} \|a_{1,\cdot}\|_{\ell^2} \nonumber \\
		&\lesssim \lambda^{\varepsilon_2 + \frac{1+\varepsilon_1}{2}} \|a_{1,\cdot}\|_{\ell^2} \|a_{2,\cdot}\|_{\ell^2} \|a_{3,\cdot}\|_{\ell^2}. \label{eqS2minus-sum-bound}
	\end{align}
	
	Combining \eqref{eqS2minus-sum-bound} with the angular integral bound, we directly obtain
	\begin{equation}\label{eqS2minus-pre}
		|S_2^-| \lesssim \lambda^{\varepsilon_2+\frac{1+\varepsilon_1}{2}} \|a_{1,\cdot}\|_{\ell^2}\|a_{2,\cdot}\|_{\ell^2}\|a_{3,\cdot}\|_{\ell^2} \sup_{\omega\in\mathbb S^1} \int_{|\omega\cdot \theta|\le 4\lambda^{-1+\varepsilon_2}} |\Omega(\theta)|\,d\sigma(\theta).
	\end{equation}
	
	Now the new kernel condition \eqref{alpha} enters. For any $\omega\in\mathbb S^1$ and any $\delta>0$,
	\begin{equation}\label{eqsmallcap}
		\int_{|\omega\cdot\theta|\le \delta}|\Omega(\theta)|\,d\sigma(\theta) \le \delta^a \int_{\mathbb S^1}\frac{|\Omega(\theta)|}{|\omega\cdot\theta|^a}\,d\sigma(\theta) \le \delta^a \|\Omega\|_{\mathcal K_a}.
	\end{equation}
	Applying \eqref{eqsmallcap} with $\delta=4\lambda^{-1+\varepsilon_2}$ and combining with \eqref{eqS2minus-pre}, we conclude that
	\begin{equation}\label{eqS2minus-final}
		|S_2^-| \lesssim \|\Omega\|_{\mathcal K_a}\, \lambda^{\varepsilon_2+\frac{1+\varepsilon_1}{2}-a(1-\varepsilon_2)} \|\widetilde\eta f_1\|_2\|\widetilde\eta f_2\|_2\|f_3\|_\infty.
	\end{equation}
	To ensure a rapid polynomial decay, we require the exponent of $\lambda$ to be strictly negative, which is equivalent to
	\[
	\varepsilon_2 + \frac{1+\varepsilon_1}{2} - a(1-\varepsilon_2) < 0 \quad \iff \quad 2(1+a)\varepsilon_2 + \varepsilon_1 < 2a - 1.
	\]
	Since $a > 1/2$, the right-hand side $2a - 1$ is strictly positive. This guarantees the existence of sufficiently small $\varepsilon_1, \varepsilon_2 > 0$ satisfying the inequality. Denoting this negative exponent by $-c < 0$, we obtain
	\begin{equation}\label{eq:S2minus-decay}
		|S_2^-| \lesssim \lambda^{-c}\|\Omega\|_{\mathcal K_a}\|\widetilde\eta f_1\|_2\|\widetilde\eta f_2\|_2\|f_3\|_\infty.
	\end{equation}
	
	Finally, combining \eqref{eqA-terms}, \eqref{eqS1S2plus}, and \eqref{eq:S2minus-decay}, the proof is complete.
\end{proof}

With the single-scale decay estimates established, we are now ready to complete the proof of Theorem~\ref{thmmain}. To this end, we return to the decomposition~\eqref{decomp_main} and follow the multi-scale summation scheme of~\cite[Section~5]{bhojak2025alternate}, adapted to kernels satisfying condition~\eqref{alpha}.

\begin{proof}[Proof of Theorem~\ref{thmmain}]
	
	\medskip
	\noindent
	
\noindent\textbf{Step 1. Low-frequency term.}
	By \cite[Lemma~5.1 and Theorem~5.2]{bhojak2025alternate}, the multiplier 
	of $\mathcal{T}_\Omega^{LL}$ satisfies the Coifman--Meyer condition with 
	constant $\lesssim\|\Omega\|_{L^1(\mathbb S^1)}\le\|\Omega\|_{\mathcal K_a}$, 
	so
	\[
	\|\mathcal{T}_\Omega^{LL}(f_1,f_2)\|_{L^p}
	\lesssim
	\|\Omega\|_{\mathcal K_a}\|f_1\|_{L^{p_1}}\|f_2\|_{L^{p_2}}
	\]
	for all $1<p_1,p_2<\infty$, $\frac1p=\frac1{p_1}+\frac1{p_2}$, $p>\frac12$.
	
	\medskip
	\noindent
	\textbf{Step 2. Admissible growth bounds for the medium/high-frequency pieces.}  
	For the medium/high-frequency pieces \(T_{\Omega,k}^{HH}\), \(T_{\Omega,k}^{HL}\), and \(T_{\Omega,k}^{LH}\), we apply the same argument as in \cite[Lemma~5.3]{bhojak2025alternate}, replacing \(\|\Omega\|_1\) by \(\|\Omega\|_{\mathcal K_a}\). This yields the following growth bounds
	\begin{align}
		\|T_{\Omega,k}^{HH}(f_1,f_2)\|_{L^p}
		&\lesssim
		(1+k)^{\left|\frac1{p_1}-\frac12\right|+\left|\frac1{p_2}-\frac12\right|}
		\|\Omega\|_{\mathcal K_a}\,
		\|f_1\|_{L^{p_1}}\,
		\|f_2\|_{L^{p_2}},\label{eqHH-growth}\\
		\|T_{\Omega,k}^{HL}(f_1,f_2)\|_{L^p}
		&\lesssim
		(1+k)^{\left|\frac1{p_1}-\frac12\right|+\left|\frac1{p'}-\frac12\right|}
		\|\Omega\|_{\mathcal K_a}\,
		\|f_1\|_{L^{p_1}}\,
		\|f_2\|_{L^{p_2}},\label{eqHL-growth}\\
		\|T_{\Omega,k}^{LH}(f_1,f_2)\|_{L^p}
		&\lesssim
		(1+k)^{\left|\frac1{p'}-\frac12\right|+\left|\frac1{p_2}-\frac12\right|}
		\|\Omega\|_{\mathcal K_a}\,
		\|f_1\|_{L^{p_1}}\,
		\|f_2\|_{L^{p_2}}.\label{eqLH-growth}
	\end{align}
	
	\medskip
	\noindent
	\textbf{Step 3. Decay estimates from Proposition~\ref{prop_decay_strong}.}
	By Remark~\ref{rem_scale_inv}, Proposition~\ref{prop_decay_strong} applies 
	at each scale $i\in\mathbb Z$ with $\lambda\sim 2^k$. For $HH$, applying 
	the proposition and summing over $i$ via Cauchy--Schwarz and 
	Littlewood--Paley theory gives
	\begin{equation}\label{eqHH-decay-221}
		\|T_{\Omega,k}^{HH}(f_1,f_2)\|_{L^1}
		\lesssim 2^{-ck}\|\Omega\|_{\mathcal K_a}\|f_1\|_{L^2}\|f_2\|_{L^2}.
	\end{equation}
	The same argument applied to $HL$ and $LH$ yields
	\begin{align}
		\|T_{\Omega,k}^{HL}(f_1,f_2)\|_{L^2}
		&\lesssim 2^{-ck}\|\Omega\|_{\mathcal K_a}
		\|f_1\|_{L^2}\|f_2\|_{L^\infty},\label{eqHL-decay-2inf2}\\
		\|T_{\Omega,k}^{LH}(f_1,f_2)\|_{L^2}
		&\lesssim 2^{-ck}\|\Omega\|_{\mathcal K_a}
		\|f_1\|_{L^\infty}\|f_2\|_{L^2}.\label{eqLH-decay-inf22}
	\end{align}
	
	\medskip
	\noindent
	\textbf{Step 4. Interpolation for each fixed $k$.}
	For each family $I\in\{HL,LH,HH\}$ and each target $(1/p_1,1/p_2)$ in 
	the Banach triangle, we interpolate between the decay endpoint from 
	Step~3 and a suitable Banach growth point from Step~2, using bilinear 
	complex interpolation. The interpolation parameter $\theta\in(0,1)$ is 
	chosen small enough so that the growth point lies in the valid range and 
	the output exponent satisfies $1/p<1$; the polynomial growth factor 
	$(1+k)^{C(1-\theta)}$ is then absorbed into the exponential $2^{-c_0\theta k}$.
	This yields, for some $c_I>0$ depending only on $p_1,p_2$:
	
	\smallskip
	\noindent\textit{The $HL$ family.}
	\begin{equation}\label{eqHL-summable}
		\|T_{\Omega,k}^{HL}(f_1,f_2)\|_{L^p}
		\lesssim 2^{-c_{HL}k}\|\Omega\|_{\mathcal K_a}
		\|f_1\|_{L^{p_1}}\|f_2\|_{L^{p_2}}.
	\end{equation}
	
	\smallskip
	\noindent\textit{The $LH$ family.}
	\begin{equation}\label{eqLH-summable}
		\|T_{\Omega,k}^{LH}(f_1,f_2)\|_{L^p}
		\lesssim 2^{-c_{LH}k}\|\Omega\|_{\mathcal K_a}
		\|f_1\|_{L^{p_1}}\|f_2\|_{L^{p_2}}.
	\end{equation}
	
	\smallskip
	\noindent\textit{The $HH$ family.}
	\begin{equation}\label{eqHH-summable}
		\|T_{\Omega,k}^{HH}(f_1,f_2)\|_{L^p}
		\lesssim 2^{-c_{HH}k}\|\Omega\|_{\mathcal K_a}
		\|f_1\|_{L^{p_1}}\|f_2\|_{L^{p_2}}.
	\end{equation}
	
	Combining \eqref{eqHL-summable}, \eqref{eqLH-summable}, and \eqref{eqHH-summable}, and summing over \(k\), we obtain
	\[
	\|\mathcal {T}_\Omega(f_1,f_2)\|_{L^p}
	\lesssim
	\|\Omega\|_{\mathcal K_a}\,
	\|f_1\|_{L^{p_1}}\,
	\|f_2\|_{L^{p_2}}.
	\]
	This completes the proof of Theorem~\ref{thmmain}.
\end{proof}

\section{Maximal and maximally truncated operators}\label{secmaximal}

In this section, we prove the maximal and maximally truncated counterparts of Theorem~\ref{thmmain}. The ideas are essentially the same as in~\cite[Theorems~1.5 and~1.7, and Sections~7 and~8]{bhojak2025alternate}. The only new ingredient is that the kernel assumption is now condition~\eqref{alpha}, so that $\|\Omega\|_{L^1(\mathbb S^1)}$ is systematically replaced by $\|\Omega\|_{\mathcal K_a}$, while the required decay estimates are supplied by the results established in the previous section. For the reader's convenience, we include the main steps of the proof.

\begin{proof}[Proof of Theorem~\ref{thmmax}]
	We follow \cite[Section~7 and Theorem~1.7]{bhojak2025alternate} with
	$\|\Omega\|_{L^1(\mathbb S^1)}$ replaced throughout by
	$\|\Omega\|_{\mathcal K_a}$, which is valid since
	\[
	\|\Omega\|_{L^1(\mathbb S^1)}
	\le\sup_{\xi\in\mathbb S^1}\int_{\mathbb S^1}
	\frac{|\Omega(\theta)|}{|\theta\cdot\xi|^a}\,d\sigma(\theta)
	=\|\Omega\|_{\mathcal K_a}.
	\]
	
	\medskip
	\noindent
	\textbf{Step 1. Frequency decomposition.}
	As in \cite[Section~7]{bhojak2025alternate}, $M_\Omega$ is pointwise
	dominated by the dyadic annular supremum $\sup_{i\in\mathbb Z}
	\mathcal{T}_\Omega^i$. Applying the same Littlewood--Paley decomposition
	as in the proof of Theorem~\ref{thmmain} gives
	\[
	\sup_{i\in\mathbb Z}|\mathcal{T}_\Omega^i(f_1,f_2)(x)|
	\lesssim M_\Omega^{LL}(f_1,f_2)(x)
	+\sum_{k\ge 1}\bigl(M_{\Omega,k}^{HL}+M_{\Omega,k}^{LH}
	+M_{\Omega,k}^{HH}\bigr)(f_1,f_2)(x),
	\]
	where $M_\Omega^{LL}$, $M_{\Omega,k}^{HL}$, $M_{\Omega,k}^{LH}$,
	$M_{\Omega,k}^{HH}$ are defined as in
	\cite[Section~7]{bhojak2025alternate}.
	Setting $T_{\Omega,k}^{I,i}$ for the single-scale pieces as in
	\cite[(7.4)]{bhojak2025alternate}, we have the pointwise bounds
	\begin{equation}\label{eqmax-pointwise-1}
		|M_{\Omega,k}^I(f_1,f_2)(x)|
		\le\sum_{i\in\mathbb Z}|T_{\Omega,k}^{I,i}(f_1,f_2)(x)|,
		\qquad I\in\{HL,LH,HH\},
	\end{equation}
	\begin{equation}\label{eqmax-pointwise-2}
		|M_{\Omega,k}^I(f_1,f_2)(x)|
		\le\Bigl(\sum_{i\in\mathbb Z}
		|T_{\Omega,k}^{I,i}(f_1,f_2)(x)|^2\Bigr)^{1/2},
		\qquad I\in\{HL,LH\},
	\end{equation}
	where \eqref{eqmax-pointwise-1} follows from
	$\sup_i|a_i|\le\sum_i|a_i|$, and \eqref{eqmax-pointwise-2} from
	$\sup_i|a_i|\le(\sum_i|a_i|^2)^{1/2}$.
	
	\medskip
	\noindent
	\textbf{Step 2. Low-low term.}
	For each $i\in\mathbb Z$, the bound
	$|f_j*\phi_i(x-2^{-i}y_j)|\lesssim Mf_j(x)$ gives
	\[
	M_\Omega^{LL}(f_1,f_2)(x)
	\lesssim\|\Omega\|_{\mathcal K_a}\,Mf_1(x)\,Mf_2(x).
	\]
	By H\"older's inequality and the Hardy--Littlewood maximal theorem,
	\[
	\|M_\Omega^{LL}(f_1,f_2)\|_{L^p}
	\lesssim\|\Omega\|_{\mathcal K_a}
	\|f_1\|_{L^{p_1}}\|f_2\|_{L^{p_2}}.
	\]
	
	\medskip
	\noindent
	\textbf{Step 3. Growth bounds.}
	For each fixed $k\ge 1$ and $I\in\{HL,LH,HH\}$, the argument of
	\cite[Lemma~7.1(1)]{bhojak2025alternate}---using
	\eqref{eqmax-pointwise-1}--\eqref{eqmax-pointwise-2}, the
	Fefferman--Stein vector-valued maximal inequality, and
	Littlewood--Paley theory---yields
	\begin{equation}\label{eqM-growth}
		\|M_{\Omega,k}^I(f_1,f_2)\|_{L^p}
		\lesssim(1+k)^{C_I}\|\Omega\|_{\mathcal K_a}
		\|f_1\|_{L^{p_1}}\|f_2\|_{L^{p_2}},
	\end{equation}
	where $C_{HH}=|\frac1{p_1}-\frac12|+|\frac1{p_2}-\frac12|$,
	$C_{HL}=|\frac1{p_1}-\frac12|+|\frac1{p'}-\frac12|$,
	$C_{LH}=|\frac1{p'}-\frac12|+|\frac1{p_2}-\frac12|$.
	
	\medskip
	\noindent
	\textbf{Step 4. Decay estimates.}
	By \eqref{eqmax-pointwise-1}, \eqref{eqmax-pointwise-2}, and the
	single-scale decay bounds \eqref{eqHH-decay-221},
	\eqref{eqHL-decay-2inf2}, \eqref{eqLH-decay-inf22} from
	Proposition~\ref{prop_decay_strong} (via Remark~\ref{rem_scale_inv}),
	the same Cauchy--Schwarz and Littlewood--Paley summation as in
	Step~3 of the proof of Theorem~\ref{thmmain} gives
	\begin{align}
		\|M_{\Omega,k}^{HH}(f_1,f_2)\|_{L^1}
		&\lesssim 2^{-ck}\|\Omega\|_{\mathcal K_a}
		\|f_1\|_{L^2}\|f_2\|_{L^2},\label{eqM-HH-decay}\\
		\|M_{\Omega,k}^{HL}(f_1,f_2)\|_{L^2}
		&\lesssim 2^{-ck}\|\Omega\|_{\mathcal K_a}
		\|f_1\|_{L^2}\|f_2\|_{L^\infty},\label{eqM-HL-decay}\\
		\|M_{\Omega,k}^{LH}(f_1,f_2)\|_{L^2}
		&\lesssim 2^{-ck}\|\Omega\|_{\mathcal K_a}
		\|f_1\|_{L^\infty}\|f_2\|_{L^2}.\label{eqM-LH-decay}
	\end{align}
	
	\medskip
	\noindent
	\textbf{Step 5. Interpolation and summation.}
	Interpolating between \eqref{eqM-growth} and the corresponding decay
	estimate in \eqref{eqM-HH-decay}--\eqref{eqM-LH-decay} exactly as
	in Step~4 of the proof of Theorem~\ref{thmmain}, we obtain for some
	$c_*>0$ and every $I\in\{HH,HL,LH\}$,
	\begin{equation}\label{eqMk-final-decay}
		\|M_{\Omega,k}^I(f_1,f_2)\|_{L^p}
		\lesssim 2^{-c_*k}\|\Omega\|_{\mathcal K_a}
		\|f_1\|_{L^{p_1}}\|f_2\|_{L^{p_2}}.
	\end{equation}
	Combining Step~2 with \eqref{eqMk-final-decay} and summing over
	$k\ge 1$ completes the proof.
\end{proof}

With the maximal estimate now established, we proceed to the maximally truncated operator. 

\begin{proof}[Proof of Theorem~\ref{thmtrunc}]
	We follow \cite[Section~8 and Theorem~1.5]{bhojak2025alternate}, again
	replacing $\|\Omega\|_{L^1(\mathbb S^1)}$ by $\|\Omega\|_{\mathcal K_a}$
	throughout.
	
	\medskip
	\noindent
	\textbf{Step 1. Reduction.}
	As in \cite[Section~8]{bhojak2025alternate}, the pointwise inequality
	\[
	\mathcal{T}_\Omega^*(f_1,f_2)(x)
	\le\sup_{j\in\mathbb Z}
	\Big|\sum_{i>j}\mathcal{T}_\Omega^i(f_1,f_2)(x)\Big|
	+M_{|\Omega|}(f_1,f_2)(x)
	\]
	holds. Since $\||\Omega|\|_{\mathcal K_a}=\|\Omega\|_{\mathcal K_a}$,
	the term $M_{|\Omega|}$ is controlled by Theorem~\ref{thmmax}:
	\[
	\|M_{|\Omega|}(f_1,f_2)\|_{L^p}
	\lesssim\|\Omega\|_{\mathcal K_a}
	\|f_1\|_{L^{p_1}}\|f_2\|_{L^{p_2}}.
	\]
	It therefore remains to estimate
	$\sup_{j\in\mathbb Z}|\sum_{i>j}\mathcal{T}_\Omega^i(f_1,f_2)|$.
	
	\medskip
	\noindent
	\textbf{Step 2. Frequency decomposition of the tail.}
	The same Littlewood--Paley decomposition as in the proof of
	Theorem~\ref{thmmax} gives
	\[
	\sup_{j\in\mathbb Z}
	\Big|\sum_{i>j}\mathcal{T}_\Omega^i(f_1,f_2)\Big|
	\lesssim T_\Omega^{LL,*}(f_1,f_2)
	+\sum_{k\ge 1}\bigl(T_{\Omega,k}^{HL,*}
	+T_{\Omega,k}^{LH,*}+T_{\Omega,k}^{HH,*}\bigr)(f_1,f_2),
	\]
	where $T_{\Omega,k}^{I,*}$ are defined as in
	\cite[Section~8]{bhojak2025alternate}.
	
	\medskip
	\noindent
	\textbf{Step 3. Low-low tail term.}
	The kernel $K^{LL}=\sum_i K_i*(\phi_i\otimes\phi_i)$ is a standard
	bilinear Calder\'on--Zygmund kernel, so Cotlar's inequality gives
	\[
	\|T_\Omega^{LL,*}(f_1,f_2)\|_{L^p}
	\lesssim\|\Omega\|_{L^1(\mathbb S^1)}
	\|f_1\|_{L^{p_1}}\|f_2\|_{L^{p_2}}
	\le\|\Omega\|_{\mathcal K_a}
	\|f_1\|_{L^{p_1}}\|f_2\|_{L^{p_2}}.
	\]
	
	\medskip
	\noindent
	\textbf{Step 4. Pointwise reductions for medium/high tail pieces.}
	For $I\in\{HL,LH,HH\}$, we have the straightforward bound
	\begin{equation}\label{eqtail-pointwise-1}
		|T_{\Omega,k}^{I,*}(f_1,f_2)(x)|
		\le\sum_{i\in\mathbb Z}|T_{\Omega,k}^{I,i}(f_1,f_2)(x)|.
	\end{equation}
	For $I\in\{HL,LH\}$, the Fourier support argument of
	\cite[pp.~22--23]{bhojak2025alternate} gives the refined estimate
	\begin{equation}\label{eqtail-pointwise-2}
		|T_{\Omega,k}^{I,*}(f_1,f_2)(x)|
		\lesssim|T_{\Omega,k}^I(f_1,f_2)(x)|
		+M_{HL}\bigl(T_{\Omega,k}^I(f_1,f_2)\bigr)(x)
		+M_{HL}\bigl(M_{\Omega,k}^I(f_1,f_2)\bigr)(x),
	\end{equation}
	where $M_{HL}$ is the Hardy--Littlewood maximal operator.
	
	\medskip
	\noindent
	\textbf{Step 5. Growth bounds.}
	For each fixed $k\ge 1$ and $I\in\{HL,LH,HH\}$:
	
	For $I=HH$, \eqref{eqtail-pointwise-1} and the same argument as
	in Step~3 of the proof of Theorem~\ref{thmmax} give
	\[
	\|T_{\Omega,k}^{HH,*}(f_1,f_2)\|_{L^p}
	\lesssim(1+k)^{C_{HH}}\|\Omega\|_{\mathcal K_a}
	\|f_1\|_{L^{p_1}}\|f_2\|_{L^{p_2}}.
	\]
	
	For $I\in\{HL,LH\}$, applying \eqref{eqtail-pointwise-2}, the
	$L^p$-boundedness of $M_{HL}$, the growth bounds
	\eqref{eqHL-growth}--\eqref{eqLH-growth}, and the maximal growth
	bound \eqref{eqM-growth} gives
	\[
	\|T_{\Omega,k}^{I,*}(f_1,f_2)\|_{L^p}
	\lesssim\|T_{\Omega,k}^I(f_1,f_2)\|_{L^p}
	+\|M_{\Omega,k}^I(f_1,f_2)\|_{L^p}
	\lesssim(1+k)^{C_I}\|\Omega\|_{\mathcal K_a}
	\|f_1\|_{L^{p_1}}\|f_2\|_{L^{p_2}}.
	\]
	
	Hence for all three families,
	\begin{equation}\label{eqtail-growth}
		\|T_{\Omega,k}^{I,*}(f_1,f_2)\|_{L^p}
		\lesssim(1+k)^{C_I}\|\Omega\|_{\mathcal K_a}
		\|f_1\|_{L^{p_1}}\|f_2\|_{L^{p_2}}.
	\end{equation}
	
	\medskip
	\noindent
	\textbf{Step 6. Decay bounds.}
	At the three anchor exponent triples, the following decay bounds hold.
	For $I=HH$, \eqref{eqtail-pointwise-1} and \eqref{eqM-HH-decay} give
	\begin{equation}\label{eqtail-decay-HH}
		\|T_{\Omega,k}^{HH,*}(f_1,f_2)\|_{L^1}
		\lesssim 2^{-ck}\|\Omega\|_{\mathcal K_a}
		\|f_1\|_{L^2}\|f_2\|_{L^2}.
	\end{equation}
	For $I=HL$, \eqref{eqtail-pointwise-2}, the $L^2$-boundedness of
	$M_{HL}$, \eqref{eqHL-decay-2inf2}, and \eqref{eqM-HL-decay} give
	\begin{equation}\label{eqtail-decay-HL}
		\|T_{\Omega,k}^{HL,*}(f_1,f_2)\|_{L^2}
		\lesssim\|T_{\Omega,k}^{HL}(f_1,f_2)\|_{L^2}
		+\|M_{\Omega,k}^{HL}(f_1,f_2)\|_{L^2}
		\lesssim 2^{-ck}\|\Omega\|_{\mathcal K_a}
		\|f_1\|_{L^2}\|f_2\|_{L^\infty}.
	\end{equation}
	By symmetry,
	\begin{equation}\label{eqtail-decay-LH}
		\|T_{\Omega,k}^{LH,*}(f_1,f_2)\|_{L^2}
		\lesssim 2^{-ck}\|\Omega\|_{\mathcal K_a}
		\|f_1\|_{L^\infty}\|f_2\|_{L^2}.
	\end{equation}
	
	\medskip
	\noindent
	\textbf{Step 7. Interpolation and summation.}
	Interpolating between \eqref{eqtail-growth} and the corresponding
	decay estimate \eqref{eqtail-decay-HH}--\eqref{eqtail-decay-LH}
	exactly as in Step~4 of the proof of Theorem~\ref{thmmain}, we obtain
	for some $c_{**}>0$ and every $I\in\{HH,HL,LH\}$,
	\begin{equation}\label{eqtail-final-decay}
		\|T_{\Omega,k}^{I,*}(f_1,f_2)\|_{L^p}
		\lesssim 2^{-c_{**}k}\|\Omega\|_{\mathcal K_a}
		\|f_1\|_{L^{p_1}}\|f_2\|_{L^{p_2}}.
	\end{equation}
	Combining the low-low bound from Step~3 and the $LL,*$ term from Step~2
	with \eqref{eqtail-final-decay} and summing over $k\ge 1$ completes the proof.
\end{proof}

\section{Relationship with the Orlicz Space}\label{sec4}

This section is devoted to the proof of Theorem~\ref{thmincomparability}. condition~\eqref{alpha} gives rise to a geometrically defined class of rough singularities, whose relation to the classical Lebesgue scale was already observed in~\cite{Duoandikoetxea}. More precisely, if $q>\frac{1}{1-a}$, then H\"older's inequality gives
\[
\int_{\mathbb{S}^{2n-1}}\frac{|\Omega(\theta)|}{|\theta\cdot\xi'|^a}\,d\sigma(\theta)
\leq
\|\Omega\|_{L^q(\mathbb{S}^{1})}
\Bigl(\int_{\mathbb{S}^{1}}|\theta\cdot\xi'|^{-aq'}\,d\sigma(\theta)\Bigr)^{1/q'}
\lesssim
\|\Omega\|_{L^q(\mathbb{S}^{1})}.
\]
Thus, one has the continuous inclusion $L^q(\mathbb{S}^{1})\subset \mathcal K_a$ whenever $q>\frac{1}{1-a}$. However, this observation lies far from the critical integrability regime relevant to rough singular integrals, where the natural near-$L^1$ substitutes are the Orlicz spaces $L(\log L)^\alpha$. The main purpose of this section is to show that condition~\eqref{alpha} is genuinely different from that Orlicz scale because neither condition dominates the other in general. In this sense, the gap between the fractional geometric condition and the classical logarithmic integrability assumptions is not merely technical, but reflects a genuine structural distinction.

The proof of Theorem~\ref{thmincomparability} is divided into two propositions. Part (i) is proved in Proposition~\ref{propforward_1d_orlicz}, and part (ii) in Proposition~\ref{propreverse_orlicz}.

\subsection{Proof of Theorem \ref{thmincomparability} (i): Failure of Orlicz and Lebesgue integrability}
The proof proceeds by constructing $\Omega$ as a sum of characteristic functions of carefully positioned tiny intervals, then verifying the fractional condition via a delicate geometric summation and the failure of the Orlicz condition through a level set argument.

\begin{proposition}\label{propforward_1d_orlicz}
	Let $0<a<1$ and $\alpha>1$. There exists a nonnegative function
	$\Omega \in L^1(\mathbb{S}^1)$ such that
	\[
	\Omega\in \mathcal K_a(\mathbb{S}^1)
	\]
	but
	\[
	\Omega\notin L(\log L)^\alpha(\mathbb{S}^1).
	\]
\end{proposition}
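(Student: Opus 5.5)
The plan is to reduce condition~\eqref{alpha} to a uniform bound on a one-dimensional Riesz-type potential, and then to build $\Omega$ as a superposition of ``layers''. Each layer has bounded height on a tiny set. The masses are chosen summable, but the masses weighted by $\log^\alpha(\text{height})$ are not.

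\emph{Reduction.} Parametrize $\mathbb S^1$ by angle. For $\xi\in\mathbb S^1$ we have $|\theta\cdot\xi|\approx \dist(\theta,\{\xi^\perp,-\xi^\perp\})$. Hence for nonnegative $\Omega$,
\[
\|\Omega\|_{\mathcal K_a}\lesssim \sup_{\theta_0\in\mathbb S^1}\int_{\mathbb S^1}\frac{\Omega(\theta)}{|\theta-\theta_0|^a}\,d\sigma(\theta),
\]
so it suffices to bound this potential uniformly in $\theta_0$.

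\emph{Construction.} For $j\ge 1$ set
\[
H_j=2^j,\qquad m_j=j^{-1-\alpha},\qquad \ell_j=2^{-2j/(1-a)}.
\]
Let $\Omega_j=H_j\sum_{i=1}^{N_j}\mathbf 1_{I_{j,i}}$, where the $I_{j,i}$ are intervals of length $\ell_j$ and $N_j\approx m_j/(H_j\ell_j)$. The intervals are placed equally spaced around the circle, with spacing $d_j\approx 1/N_j$. Thus $\int\Omega_j\approx m_j$ and $|\operatorname{supp}\Omega_j|\approx m_j2^{-j}$. Since $\sum_j m_j2^{-j}<\infty$ and the total support measure is small, the configurations can be slightly rotated so that the supports for different $j$ are pairwise disjoint. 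Put $\Omega=\sum_j\Omega_j$. Then $\Omega\ge0$ and $\|\Omega\|_{L^1}\approx\sum_j m_j<\infty$.

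\emph{Failure of $L(\log L)^\alpha$.} By disjointness of the supports,
\[
\int\Omega\log^\alpha(e+\Omega)\approx\sum_j m_j\, j^\alpha=\sum_j j^{-1}=\infty .
\]

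\emph{Fractional condition.} Fix $\theta_0$ and split the potential of $\Omega_j$ into three contributions.
\begin{itemize}
\item The interval nearest $\theta_0$ contributes at most $H_j\int_{|t|\le\ell_j}|t|^{-a}\,dt\lesssim 2^j\ell_j^{1-a}=2^{-j}$.
\item The intervals at distance $\approx k d_j$ for $1\le k\lesssim N_j$ contribute at most
\[
\sum_{k\lesssim N_j}H_j\ell_j(kd_j)^{-a}\approx \frac{m_j}{N_j}\sum_{k\le N_j}\Big(\frac{N_j}{k}\Big)^{a}\approx m_j\,\frac{N_j^{a}}{N_j}\,N_j^{1-a}=m_j .
\]
This is a Riemann sum for $m_j\int_{\mathbb S^1}|t|^{-a}\,dt$.
\item Both bounds are independent of $\theta_0$.
\end{itemize}
Summing over $j$ gives $\|\Omega\|_{\mathcal K_a}\lesssim\sum_j(2^{-j}+m_j)<\infty$.

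\emph{Main obstacle.} The delicate step is making the Riemann-sum comparison rigorous uniformly in $\theta_0$. One must check that $N_j\ge1$, i.e.\ $m_j\ge H_j\ell_j$; this holds for large $j$ because $\ell_j$ decays much faster than $2^{-j}j^{-1-\alpha}$, and we simply start the sum at a large $j_0$. One must also handle the antipodal point $-\xi^\perp$ and the near-interval regime where $|\theta-\theta_0|<d_j$. I would handle this by grouping the intervals of $\Omega_j$ into dyadic annuli around $\theta_0$. The annulus $\{2^{-n-1}<|\theta-\theta_0|\le2^{-n}\}$ carries $\Omega_j$-mass $\lesssim m_j2^{-n}$ when $2^{-n}\ge d_j$ (the uniform-density regime) and mass $\lesssim H_j\ell_j$ otherwise. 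Summing $2^{na}\times(\text{mass})$ over $n$ then gives the stated bound.
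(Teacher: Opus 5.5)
Your construction and your $\mathcal K_a$ estimate are essentially the paper's. Both use layers of height $H_j$ carried by $N_j$ equally spaced intervals. The interval nearest the singular point contributes $H_j\ell_j^{1-a}$, and the remaining intervals give a Riemann sum comparable to the layer's mass. This is exactly the paper's bad/good interval split, so the uniformity you flag as the main obstacle is routine. The step that fails is the claim that the configurations ``can be slightly rotated so that the supports for different $j$ are pairwise disjoint.'' Since $N_j\to\infty$, the spacing $d_j=2\pi/N_j$ tends to $0$, and every gap of any rotate of $\operatorname{supp}\Omega_j$ has length $d_j-\ell_j<d_j$. Hence every arc of length at least $d_j$ meets every rotate of $\operatorname{supp}\Omega_j$. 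As soon as $d_j<\ell_{j_0}$, each interval of $\operatorname{supp}\Omega_{j_0}$ meets $\operatorname{supp}\Omega_j$ whatever rotation you choose. Small total measure does not give disjointness here, and your Orlicz lower bound as written rests on it.

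The repair is easy, because disjointness is only used in that lower bound; the $L^1$ and $\mathcal K_a$ bounds are linear in $\Omega$. With $\Phi(t)=t\log^\alpha(e+t)$ one has $\Phi(s)+\Phi(t)\le\Phi(s+t)$ for $s,t\ge0$, since $\log^\alpha(e+\cdot)$ is nondecreasing. Hence $\Phi(\Omega)\ge\sum_j\Phi(\Omega_j)$ pointwise, and
$\int\Phi(\Omega)\,d\sigma\ge\sum_j\Phi(H_j)\,\sigma(\operatorname{supp}\Omega_j)\gtrsim\sum_j m_j j^\alpha=\sum_j j^{-1}=\infty$.

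The paper instead uses $A_k=E_k\setminus\bigcup_{m>k}E_m$, on which $\Omega\ge h_k$, together with the union bound $\sigma(A_k)\ge w_k-\sum_{m>k}w_m\ge\frac23w_k$. That is why it takes $h_k=4^k$: the later supports then form a small fraction of $w_k$. This route would not transfer to your parameters. With $H_j=2^j$ and $m_j=j^{-1-\alpha}$, the later supports have total measure $\bigl(1-O(1/j)\bigr)\sigma(\operatorname{supp}\Omega_j)$. The union bound then leaves only $O(j^{-2-\alpha}2^{-j})$, which produces the convergent series $\sum_j j^{-2}$.

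With the superadditivity fix your proof is complete. Your masses $m_j=j^{-1-\alpha}$ even give the result for every $\alpha>0$, whereas the paper's masses $k^{-\alpha}$ require $\alpha>1$.
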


\begin{proof}
	We identify $\mathbb{S}^1$ with $[0,2\pi)$ endowed with periodic arc-length measure. For each integer $k \ge 1$, we first partition $\mathbb{S}^1$ into $N_k$ consecutive arcs of equal length $2\pi/N_k$, where the number of partition intervals is given by
	$$N_k = \Bigl\lceil k^{-\alpha}4^{k(\frac{1}{1-a} - 1)}\,k^{\frac{2}{1-a}}\Bigr\rceil.$$
	Due to the dominant exponential growth of $4^{k(\frac{1}{1-a} - 1)}$ (since $0<a<1$), we have $N_k \to \infty$ as $k \to \infty$. The ceiling function naturally guarantees that $N_k \ge 1$. 
	
	In the center of each such arc, we choose a closed subinterval $I_{k,j}$ for $1 \le j \le N_k$. 
	
	Next, we define the height and total mass parameters at level $k$ as
	$$h_k = 4^k, \qquad w_k = k^{-\alpha}4^{-k}.$$
	To uniformly distribute the mass $w_k$ around the circle, we set the length of each individual subinterval to be
	$$\delta_k = \frac{w_k}{N_k},$$
	so that $|I_{k,j}| = \delta_k$. Since $w_k \le 1$ for all $k \ge 1$, it is easy to observe that
	$$\delta_k = \frac{w_k}{N_k} \le \frac{1}{N_k} < \frac{2\pi}{N_k}.$$
	This strict inequality guarantees that the subintervals $I_{k,j}$ are strictly contained within their respective ambient arcs and are therefore pairwise disjoint within the fixed level $k$.
	
	We then define the set $E_k$ as the union of these uniformly distributed intervals
	$$E_k = \bigcup_{j=1}^{N_k} I_{k,j}.$$
	By construction, the measure of this set is precisely $\sigma(E_k) = N_k \delta_k = w_k$.
	
	Finally, having constructed the sets $E_k$ for all $k \ge 1$, we define the non-negative measurable function $\Omega$ on $\mathbb{S}^1$ by
	$$\Omega(\theta) = \sum_{k=1}^\infty h_k \chi_{E_k}(\theta).$$
	We now verify that this $\Omega$ satisfies all the required properties.
	
	\medskip
	\noindent\textbf{Step 1 \(\Omega\in L^1(\mathbb{S}^1)\).}
	
	Since $\Omega\ge 0$, Tonelli's theorem gives
	\[
	\int_{\mathbb{S}^1}\Omega(\theta)\,d\sigma(\theta)
	=\sum_{k=1}^\infty h_k \sigma(E_k)
	=\sum_{k=1}^\infty h_k w_k
	=\sum_{k=1}^\infty 4^k\cdot k^{-\alpha}4^{-k}
	=\sum_{k=1}^\infty k^{-\alpha}<\infty,
	\]
	because $\alpha>1$. Hence $\Omega\in L^1(\mathbb{S}^1)$.
	
	\medskip
	\noindent\textbf{Step 2 \(\Omega\notin L(\log L)^\alpha(\mathbb{S}^1)\).}
	
	Define
	\[
	A_k=E_k\setminus \bigcup_{m>k}E_m.
	\]
	Then
	\[
	\sigma(A_k)\ge \sigma(E_k)-\sum_{m>k}\sigma(E_m)
	=w_k-\sum_{m>k}w_m.
	\]
	Since $w_m=m^{-\alpha}4^{-m}$ and $m\mapsto m^{-\alpha}$ is decreasing,
	\[
	\sum_{m>k}w_m
	\le \frac{1}{(k+1)^\alpha}\sum_{m=k+1}^\infty 4^{-m}
	=\frac{1}{(k+1)^\alpha}\frac{4^{-k-1}}{1-1/4}
	=\frac{1}{3}(k+1)^{-\alpha}4^{-k}
	\le \frac{1}{3}k^{-\alpha}4^{-k}
	=\frac{1}{3}w_k.
	\]
	Therefore
	\[
	\sigma(A_k)\ge \frac{2}{3}w_k.
	\]
	If $\theta\in A_k$, then $\theta\in E_k$ and $\theta\notin E_m$ for all $m>k$, hence
	\[
	\Omega(\theta)=\sum_{m=1}^\infty h_m\chi_{E_m}(\theta)\ge h_k.
	\]
	It follows that
	\begin{align*}
		\int_{\mathbb{S}^1}\Omega(\theta)\bigl(\log(e+\Omega(\theta))\bigr)^\alpha\,d\sigma(\theta)
		&\ge \sum_{k=1}^\infty
		\int_{A_k} h_k\bigl(\log(e+h_k)\bigr)^\alpha\,d\sigma(\theta) \\
		&= \sum_{k=1}^\infty h_k\bigl(\log(e+h_k)\bigr)^\alpha \sigma(A_k).
	\end{align*}
	Using $h_k=4^k$, $\sigma(A_k)\ge \frac23 w_k$, and
	$\log(e+4^k)\ge k\log 4$, we obtain
	\begin{align*}
		\int_{\mathbb{S}^1}\Omega(\theta)\bigl(\log(e+\Omega(\theta))\bigr)^\alpha\,d\sigma(\theta)
		&\ge \frac{2}{3}\sum_{k=1}^\infty
		4^k (k\log 4)^\alpha w_k \\
		&= \frac{2}{3}(\log 4)^\alpha
		\sum_{k=1}^\infty 4^k k^\alpha \cdot k^{-\alpha}4^{-k} \\
		&= \frac{2}{3}(\log 4)^\alpha \sum_{k=1}^\infty 1
		= \infty.
	\end{align*}
	Therefore $\Omega\notin L(\log L)^\alpha(\mathbb{S}^1)$.
	
	\medskip
	\noindent\textbf{Step 3 \(\Omega\in \mathcal K_a(\mathbb{S}^1)\).}
	
	For each $k\ge 1$, define
	$$ I_k(\xi) = \int_{E_k} \frac{1}{|\theta \cdot \xi|^a} \, d\sigma(\theta). $$
	Then, by Tonelli's theorem,
	$$ \int_{\mathbb{S}^1} \frac{\Omega(\theta)}{|\theta \cdot \xi|^a} \, d\sigma(\theta) = \sum_{k=1}^\infty h_k I_k(\xi). $$
	Hence it suffices to prove that
	$$ \sup_{\xi\in\mathbb{S}^1} I_k(\xi) \lesssim_a \delta_k^{\,1-a} + w_k \qquad \text{uniformly in } k. $$
	
	Fix $\xi \in \mathbb{S}^1$ and define its orthogonal poles $P_\xi = \{\xi^\perp, -\xi^\perp\}$. For any $\theta \in \mathbb{S}^1$, the inner product satisfies
	$$ |\theta \cdot \xi| = \sin(d_{\mathbb{S}^1}(\theta, P_\xi)), \quad \text{where } d_{\mathbb{S}^1}(\theta, P_\xi) = \min_{p \in P_\xi} \arccos(\theta \cdot p). $$
	
	Recall that $\mathbb{S}^1$ is partitioned into $N_k$ arcs, with each subinterval $I_{k,j}$ centered in its ambient arc. We define the set of \emph{bad} intervals as
	$$ \mathcal{B}_k(\xi) = \{ I_{k,j}:I_{k,j} \cap P_\xi \neq \emptyset \}, $$
	and call the remaining intervals \emph{good}. Since there are exactly two poles in $P_\xi$, and each pole can intersect at most two adjacent partition arcs, there are at most $4$ bad intervals in $\mathcal{B}_k(\xi)$.
	
	For any bad interval $I_{k,j} \in \mathcal{B}_k(\xi)$, its ambient arc intersects a pole $p \in P_\xi$, introducing a singularity. Parametrizing $\theta \in I_{k,j}$ by its distance to the nearest pole, $\varphi = d_{\mathbb{S}^1}(\theta, P_\xi)$, gives $|\theta \cdot \xi| = \sin \varphi$. This maps $I_{k,j}$ to an angular interval $\widetilde{I}_{k,j}$. On the unit circle, the arc length equals the magnitude of its corresponding central angle, so $|\widetilde{I}_{k,j}| = |I_{k,j}| = \delta_k$ and $d\sigma(\theta) = d\varphi$. Thus, changing variables yields
	$$ \int_{I_{k,j}} |\theta \cdot \xi|^{-a} \, d\sigma(\theta) = \int_{\widetilde{I}_{k,j}} |\sin \varphi|^{-a} \, d\varphi. $$
	
	Observe that $p \in I_{k,j}$ is equivalent to $0 \in \widetilde{I}_{k,j}$ in the parameter domain. Because the value of $|\sin \varphi|^{-a}$ gets larger as it gets closer to $0$, the integral $\int_{\widetilde{I}_{k,j}} |\sin \varphi|^{-a} \, d\varphi$ is maximized when $0$ is positioned exactly at the center of $\widetilde{I}_{k,j}$. Therefore, we have
	$$\int_{\widetilde{I}_{k,j}} |\sin \varphi|^{-a} \, d\varphi \le \int_{-\delta_k/2}^{\delta_k/2} |\sin \varphi|^{-a} \, d\varphi.$$
	
	Since $\delta_k/2 \le \pi/2$, we have $|\sin \varphi| \ge \frac{2}{\pi} |\varphi|$, which yields
	$$ \int_{-\delta_k/2}^{\delta_k/2} |\sin \varphi|^{-a} \, d\varphi \le 2 \left(\frac{\pi}{2}\right)^a \int_0^{\delta_k/2} \varphi^{-a} \, d\varphi \lesssim_a \delta_k^{\,1-a}. $$
	Summing over the bounded number of bad intervals gives
	$$ \sum_{I_{k,j} \, \text{bad}} \int_{I_{k,j}} |\theta \cdot \xi|^{-a} \, d\sigma(\theta) \lesssim_a \delta_k^{\,1-a}.$$
	
	We call the remaining intervals \emph{good}. For $1 \le m \le \lfloor N_k/4 \rfloor$, we group these good intervals by their distance to the poles $P_\xi$ by defining
	$$ \mathcal{G}_{k,m}(\xi) = \left\{ I_{k,j} \notin \text{bad}  \frac{2\pi m}{N_k} \le d_{\mathbb{S}^1}(I_{k,j}, P_\xi) < \frac{2\pi (m+1)}{N_k} \right\}. $$
	By construction, each $\mathcal{G}_{k,m}$ contains at most 4 intervals $I_{k,j}$, as each of the two poles admits at most one corresponding interval in both the clockwise and counterclockwise directions at a given grid distance $m$. Furthermore, the distance from any point in these intervals to the nearest singularity is strictly bounded from below by
	$$ \inf_{\theta \in I_{k,j} \in \mathcal{G}_{k,m}(\xi)} d_{\mathbb{S}^1}(\theta, P_\xi) \ge \frac{2\pi m}{N_k} \gtrsim \frac{m}{N_k}. $$
	
	Consequently, for any interval $I_{k,j} \in \mathcal{G}_{k,m}(\xi)$, we can uniformly bound the singular kernel by
	$$ |\theta \cdot \xi|^{-a} = |\sin(d_{\mathbb{S}^1}(\theta, P_\xi))|^{-a} \lesssim \left(\frac{N_k}{m}\right)^a.$$
	Integrating this bound over the length $\delta_k$ of the interval gives
	$$ \int_{I_{k,j}} |\theta \cdot \xi|^{-a} \, d\sigma(\theta) \lesssim \delta_k \left(\frac{N_k}{m}\right)^a. $$
	Summing over the at most 4 intervals in $\mathcal{G}_{k,m}(\xi)$ and then over all possible discrete distances $m$ yields
	$$ \sum_{I_{k,j} \, \text{good}} \int_{I_{k,j}} |\theta \cdot \xi|^{-a} \, d\sigma(\theta) \lesssim \delta_k N_k^a \sum_{m=1}^{\lfloor N_k/4 \rfloor} m^{-a}. $$
	Since $0 < a < 1$, the sum is asymptotically comparable to $N_k^{1-a}$, and since $\delta_k = w_k/N_k$, we have
	$$ \sum_{I_{k,j} \, \text{good}} \int_{I_{k,j}} |\theta \cdot \xi|^{-a} \, d\sigma(\theta) \lesssim_a \delta_k N_k^a \cdot N_k^{1-a} = \delta_k N_k = w_k. $$
	
	Summing the contributions from both the bad and good intervals gives
	$$ I_k(\xi) = \int_{E_k} \frac{1}{|\theta \cdot \xi|^a} \, d\sigma(\theta) \lesssim_a \delta_k^{1-a} + w_k. $$
	Crucially, this uniform bound holds independently of the choice of $\xi \in \mathbb{S}^1$.
	
	Finally, taking the supremum over $\xi$ and summing over all $k$, we conclude
	$$ \sup_{\xi \in \mathbb{S}^1} \int_{\mathbb{S}^1} \frac{|\Omega(\theta)|}{|\theta \cdot \xi|^a} \, d\sigma(\theta) \lesssim_a \sum_{k=1}^\infty h_k \delta_k^{1-a} + \sum_{k=1}^\infty h_k w_k. $$
	The second sum converges because $\sum_{k=1}^\infty h_k w_k = \sum_{k=1}^\infty k^{-\alpha} < \infty$. For the first sum, since $N_k \ge w_k 4^{\frac{k}{1-a}} k^{\frac{2}{1-a}}$, we have $\delta_k = \frac{w_k}{N_k} \le 4^{-\frac{k}{1-a}} k^{-\frac{2}{1-a}}$, which implies $\delta_k^{1-a} \le 4^{-k} k^{-2}$. Thus,
	$$ \sum_{k=1}^\infty h_k \delta_k^{1-a} \le \sum_{k=1}^\infty 4^k \cdot 4^{-k} k^{-2} = \sum_{k=1}^\infty k^{-2} < \infty. $$
	Hence, we conclude that $\sup_{\xi \in \mathbb{S}^1} \int_{\mathbb{S}^1} \frac{|\Omega(\theta)|}{|\theta \cdot \xi|^a} \, d\sigma(\theta) < \infty$, completing the proof of Proposition~\ref{propforward_1d_orlicz}.
\end{proof}

\subsection{Proof of Theorem \ref{thmincomparability} (ii): Failure of the fractional condition}

Having established that Condition (\ref{alpha}) encompasses highly singular functions outside of the Orlicz space $L(\log L)^\alpha(\mathbb{S}^{1})$, we now prove part (ii), demonstrating the strict failure of the converse inclusion.

\begin{proposition}\label{propreverse_orlicz}
	Let $0<a<1$ and $\alpha>1$. There exists a nonnegative function
	$\Omega \in L^1(\mathbb{S}^1)$ such that
	\[
	\Omega\in L(\log L)^\alpha(\mathbb{S}^1)
	\]
	but
	\[
	\Omega\notin \mathcal K_a(\mathbb{S}^1).
	\]
\end{proposition}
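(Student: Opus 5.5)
The simplest route is a kernel that is as regular as possible in the Orlicz sense but concentrates a power singularity at a single point of $\mathbb{S}^1$. That point is then the pole $\xi^\perp$ for a suitable $\xi$. Identify $\mathbb{S}^1$ with $[-\pi,\pi)$ via $\theta=(\cos t,\sin t)$, and take $\xi=(1,0)$, so that $|\theta\cdot\xi|=|\cos t|$ vanishes at $t=\pm\pi/2$. Near $t_0=\pi/2$ put $s=t-\pi/2$, so that $|\theta\cdot\xi|=|\sin s|\sim|s|$. We want a nonnegative $\Omega$, supported in $|s|\le 1/2$, with
\[
\int_{|s|\le 1/2}\Omega\,\log(e+\Omega)^\alpha\,ds<\infty
\qquad\text{but}\qquad
\int_{|s|\le 1/2}\frac{\Omega(s)}{|s|^{a}}\,ds=\infty .
\]
A pure power $|s|^{-b}$ does not work. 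For $b<1$ it lies in every $L^q$ with $q<1/b$, and the $\mathcal K_a$ integral diverges only when $a+b\ge 1$, which forces $b\ge 1-a$. That is fine, but we want a clean choice that also keeps $\Omega\in L^1$, so take $b=1-a$ exactly:
\[
\Omega(\theta)=|s|^{-(1-a)}\chi_{\{|s|\le 1/2\}}\qquad\text{(and $\Omega=0$ elsewhere).}
\]

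The first step is to check $\Omega\in L(\log L)^\alpha$. Since $0<1-a<1$, we have $\Omega\in L^q(\mathbb{S}^1)$ for every $1<q<\frac{1}{1-a}$. The pointwise bound $\Omega\log(e+\Omega)^\alpha\lesssim_{q,\alpha}1+\Omega^q$ then gives finiteness. One could also compute directly: $\int_0^{1/2}s^{a-1}\log(e+s^{a-1})^\alpha\,ds\lesssim\int_0^{1/2}s^{a-1}(1+\log(1/s))^\alpha ds<\infty$. Both routes also show $\Omega\in L^1$ and $\Omega\ge 0$.

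The second step is to show $\Omega\notin\mathcal K_a$. It is enough to exhibit the single direction $\xi=(1,0)$. For it,
\[
\int_{\mathbb{S}^1}\frac{\Omega(\theta)}{|\theta\cdot\xi|^a}\,d\sigma(\theta)\ \ge\ \int_{-1/2}^{1/2}\frac{|s|^{a-1}}{|\sin s|^{a}}\,ds\ \ge\ \int_{-1/2}^{1/2}|s|^{a-1}|s|^{-a}\,ds=\int_{-1/2}^{1/2}\frac{ds}{|s|}=\infty,
\]
using $|\sin s|\le|s|$. Hence the supremum in \eqref{alpha} is infinite.

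Neither step is a real obstacle; the only delicate point is the choice of exponent. It must be strictly below $1$ to keep the $L(\log L)^\alpha$ (indeed $L^q$, $q>1$) integrability, yet at least $1-a$ to make the fractional integral diverge. The borderline $b=1-a$ satisfies both, because $a>0$. If one wants the counterexample to also satisfy the mean-zero condition \eqref{cancellation}, add the mirrored negative bump at $-\pi/2$ (i.e. $\Omega(\theta)-\Omega(-\theta)$, or a bump at a far point). This preserves both properties; the resulting function is no longer nonnegative, so it serves only as a remark.
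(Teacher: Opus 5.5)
Your proposal is correct and is essentially the paper's argument. The paper takes $\Omega(\theta)=|\theta\cdot e_1|^{-(1-a)}$ on all of $\mathbb{S}^1$, uses $\Omega\in L^q$ for $1<q<\frac{1}{1-a}$ to get $\Omega\in L(\log L)^\alpha$, and tests at $\xi=e_1$ to obtain $\int_{\mathbb{S}^1}|\theta\cdot e_1|^{-1}\,d\sigma=\infty$; you only cut the same power singularity off to a neighborhood of one pole. Your opening remark that ``a pure power $|s|^{-b}$ does not work'' contradicts your own choice $b=1-a$ and should simply be deleted.
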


\begin{proof}
	Fix a pole $e_1 \in \mathbb{S}^1$. We define the non-negative function $\Omega: \mathbb{S}^1 \to \mathbb{R}$ explicitly by$$\Omega(\theta) = \frac{1}{|\theta \cdot e_1|^{1-a}}.$$
	
	\medskip
	\noindent\textbf{Step 1 $\Omega\in L(\log L)^\alpha(\mathbb{S}^1)$.}
	
	We will show that $\Omega$ belongs to a standard Lebesgue space $L^q(\mathbb{S}^1)$ for some $q > 1$, which continuously embeds into the Orlicz space $L(\log L)^\alpha(\mathbb{S}^1)$ since the sphere has a finite measure.
	Choose a fixed exponent $q$ such that $1 < q < \frac{1}{1-a}$.
	
	Parametrizing the circle $\mathbb{S}^1$ by an angle $\varphi \in [-\pi, \pi)$ such that $\theta \cdot e_1 = \sin\varphi$, the integration over the circle reduces to a simple one-dimensional trigonometric integral. Exploiting the symmetries of the sine function, we can restrict the integration domain to $[0, \pi/2]$ and multiply the result by a factor of $4$, yielding
	$$\|\Omega\|_{L^q}^q = \int_{\mathbb{S}^1} |\theta \cdot e_1|^{-q(1-a)} \, d\sigma(\theta) = \int_{-\pi}^\pi |\sin \varphi|^{-q(1-a)} \, d\varphi = 4 \int_0^{\pi/2} (\sin \varphi)^{-q(1-a)} \, d\varphi.$$
	
	On the interval $[0, \pi/2]$, we can use the standard bound $\sin \varphi \ge \frac{2}{\pi} \varphi$ to estimate the integral from above
	$$\|\Omega\|_{L^q}^q \le 4 \left(\frac{\pi}{2}\right)^{q(1-a)} \int_0^{\pi/2} \varphi^{-q(1-a)} \, d\varphi.$$
	
	By our hypothesis, $1 < q < \frac{1}{1-a}$, which rearranges to $q(1-a) < 1$.
	Consequently, the power of $\varphi$ in the integrand is strictly greater than $-1$, and the integral converges to a finite constant. This rigorously proves that $\Omega \in L^q(\mathbb{S}^1)$.
	Because $\mathbb{S}^1$ is a space of finite measure, the continuous embedding $L^q(\mathbb{S}^1) \subset L(\log L)^\alpha(\mathbb{S}^1)$ holds for any $q > 1$ and $\alpha > 0$. Therefore, $\Omega \in L(\log L)^\alpha(\mathbb{S}^1)$.
	
	\noindent\textbf{Step 2 $\Omega\notin \mathcal K_a(\mathbb{S}^1)$.}
	
	Evaluating the integral at the specific pole $\xi = e_1$ yields a strict lower bound for the supremum
	$$\sup_{\xi \in \mathbb{S}^1} \int_{\mathbb{S}^1} \frac{|\Omega(\theta)|}{|\theta \cdot \xi|^a} \, d\sigma(\theta) \ge \int_{\mathbb{S}^1} \frac{|\theta \cdot e_1|^{-(1-a)}}{|\theta \cdot e_1|^a} \, d\sigma(\theta) = \int_{\mathbb{S}^1} \frac{1}{|\theta \cdot e_1|} \, d\sigma(\theta).$$
	
	Applying the angular parametrization again gives
	$$\int_{\mathbb{S}^1} \frac{1}{|\theta \cdot e_1|} \, d\sigma(\theta) = 4 \int_0^{\pi/2} \frac{1}{\sin \varphi} \, d\varphi.$$
	
	To establish the divergence of the integral, we can bound the sine function from above by its argument, namely $\sin \varphi \le \varphi$ for $\varphi \in (0, \pi/2]$. This yields the following strict chain of inequalities
	$$4 \int_0^{\pi/2} \frac{1}{\sin \varphi} \, d\varphi \ge 4 \int_0^{\pi/2} \frac{1}{\varphi} \, d\varphi.$$
	
	Since the integral $\int_0^{\pi/2} \frac{1}{\varphi} \, d\varphi$ diverges to $+\infty$, the entire expression diverges. This completes the proof that $\Omega$ strictly fails the fractional condition.	
\end{proof}

\begin{remark}\label{rem_higher_dim}
	The strict non-inclusion established in Theorem~\ref{thmincomparability} naturally extends to $\mathbb{S}^{n-1}$ for $n \ge 3$.
	
	For part (i), the construction adapts to $\mathbb{S}^{n-1}$ by partitioning the sphere into a grid of $N_k$ fine cells. For any fixed pole $\xi \in \mathbb{S}^{n-1}$, the singularity $|\theta \cdot \xi|^{-a}$ is distributed along an $(n-2)$-dimensional equator. As the grid is refined, the number of cells intersecting this singular equator naturally increases. However, the geometric constraint $a < 1$ acts as a critical threshold: it guarantees that the shrinking volume of the cells strictly overpowers the $(n-2)$-dimensional growth of the singular set. Consequently, the total integral over the equator decays to zero as $N_k \to \infty$, allowing one to force absolute convergence simply by selecting a rapidly growing grid cardinality $N_k$.
	
	For part (ii), setting $\Omega(\theta) = |\theta \cdot e_1|^{-\beta}$ guarantees $\Omega \in L^q(\mathbb{S}^{n-1}) \subset L(\log L)^\alpha(\mathbb{S}^{n-1})$ for any $\beta < 1$. However, testing condition \eqref{alpha} at $\xi = e_1$ yields $\int_{\mathbb{S}^{n-1}} |\theta \cdot e_1|^{-(\beta+a)} \, d\sigma(\theta) = \infty$ if $\beta + a \ge 1$. Thus, choosing any $\beta \in [1-a, 1)$ satisfies both the Orlicz integrability and the strict failure of the fractional condition.
\end{remark}

\section*{}
\setlength{\baselineskip}{18pt}
\addcontentsline{toc}{chapter}{
	{}{\bf


\textbf{Reference}}}


\begin{thebibliography}{99}
	
	\bibitem{bhojak2025alternate}
	A. Bhojak and S. Shrivastava,
	{\it An alternate approach to bilinear rough singular integrals},
	arXiv preprint arXiv:2508.19181 (2025).
	
	\bibitem{BH19}
	E. Buri\'ankov\'a and P. Honz\'ik,
	{\it Rough maximal bilinear singular integrals},
	Collect. Math. \textbf{70} (2019), no.~3, 431--446.
	
	\bibitem{calderon_existence_1952}
	A. P. Calder\'on and A. Zygmund,
	{\it On the existence of certain singular integrals},
	Acta Math. \textbf{88} (1952), 85--139.
	
	\bibitem{calderon_singular_1956}
	A. P. Calder\'on and A. Zygmund,
	{\it On singular integrals},
	Amer. J. Math. \textbf{78} (1956), 289--309.
	
	\bibitem{christ_weak_1988-1}
	M. Christ and J. L. Rubio de Francia,
	{\it Weak type $(1,1)$ bounds for rough operators. II},
	Invent. Math. \textbf{93} (1988), no.~1, 225--237.
	
	\bibitem{coifman_commutators_1975}
	R. R. Coifman and Y. Meyer,
	{\it On commutators of singular integrals and bilinear singular integrals},
	Trans. Amer. Math. Soc. \textbf{212} (1975), 315--331.
	
	\bibitem{coifman_extensions_1977}
	R. R. Coifman and G. Weiss,
	{\it Extensions of Hardy spaces and their use in analysis},
	Bull. Amer. Math. Soc. \textbf{83} (1977), no.~4, 569--645.
	
	\bibitem{connett_singular_1979}
	W. C. Connett,
	{\it Singular integrals near $L^1$},
	Proc. Sympos. Pure Math. \textbf{35}, Part~1 (1979), 163--165.
	
	\bibitem{diestel2011method}
	G. Diestel, L. Grafakos, P. Honz\'ik, Z. Si, and E. Terwilleger,
	{\it Method of rotations for bilinear singular integrals},
	Commun. Math. Anal. \textbf{2011}, Conference~3, 99--107.
	
	\bibitem{dosidis2024multilinear}
	G. Dosidis and L. Slav\'ikov\'a,
	{\it Multilinear singular integrals with homogeneous kernels near $L^1$},
	Math. Ann. \textbf{389} (2024), no.~3, 2259--2271.
	
	\bibitem{dosidis2026bilinear}
	G. Dosidis, B. J. Park, and L. Slav\'ikov\'a,
	{\it Bilinear rough singular integrals near the critical integrability via sharp Fourier multiplier criteria},
	Trans. Amer. Math. Soc., to appear (2026).
	
	\bibitem{Duoandikoetxea}
	J. Duoandikoetxea,
	{\it Fourier analysis},
	translated and revised from the 1995 Spanish original by David Cruz-Uribe,
	Graduate Studies in Mathematics, Vol.~29, Amer. Math. Soc., Providence, RI, 2001.
	
	\bibitem{grafakos1998lp}
	L. Grafakos and A. G. Stefanov,
	{\it $L^p$ bounds for singular integrals and maximal singular integrals with rough kernels},
	Indiana Univ. Math. J. \textbf{47} (1998), no.~2, 455--469.
	
	\bibitem{GT2002_maximal}
	L. Grafakos and R. H. Torres,
	{\it Maximal operator and weighted norm inequalities for multilinear singular integrals},
	Indiana Univ. Math. J. \textbf{51} (2002), no.~5, 1261--1276.
	
	\bibitem{Grafakos2002}
	L. Grafakos and R. H. Torres,
	{\it Multilinear Calder\'on-Zygmund theory},
	Adv. Math. \textbf{165} (2002), no.~1, 124--164.
	
	\bibitem{grafakos2018rough}
	L. Grafakos, D. He, and P. Honz\'ik,
	{\it Rough bilinear singular integrals},
	Adv. Math. \textbf{326} (2018), 54--78.

	\bibitem{GHS2019}	L. Grafakos, D. He, and L. Slav\'ikov\'a, Failure of the H¨ormander kernel condition for multilinear
	Calder´on-Zygmund operators, C. R. Math. Acad. Sci. Paris. \textbf {357} (2019), no. 4, 382-388.
	
	\bibitem{GHHP2024}
	L. Grafakos, D. He, P. Honz\'ik, and B. J. Park,
	{\it On pointwise a.e. convergence of multilinear operators},
	Can. J. Math. \textbf{76} (2024), 1005--1032.
	
	\bibitem{grafakos2020l2}
	L. Grafakos, D. He, and L. Slav\'ikov\'a,
	{\it $L^2\times L^2\to L^1$ boundedness criteria},
	Math. Ann. \textbf{376} (2020), no.~1--2, 431--455.
	
	\bibitem{he2023improved}
	D. He and B. J. Park,
	{\it Improved estimates for bilinear rough singular integrals},
	Math. Ann. \textbf{386} (2023), no.~3--4, 1951--1978.
	
	\bibitem{hofmann_weighted_1988}
	S. C. Hofmann,
	{\it Weighted weak-type $(1,1)$ bounds for singular integrals with nonsmooth kernel},
	Ph.D. thesis, Washington University in St. Louis, 1988.
	
	\bibitem{Kenig1999}
	C. E. Kenig and E. M. Stein,
	{\it Multilinear estimates and fractional integration},
	Math. Res. Lett. \textbf{6} (1999), no.~1, 1--15.
	
	\bibitem{Par25}
	B. J. Park,
	{\it Multilinear estimates for maximal rough singular integrals},
	Ann. Sc. Norm. Super. Pisa Cl. Sci., to appear (2025).
	
	\bibitem{seeger_singular_1996}
	A. Seeger,
	{\it Singular integral operators with rough convolution kernels},
	J. Amer. Math. Soc. \textbf{9} (1996), no.~1, 95--105.
	
	\bibitem{Tao1999}
	T. Tao,
	{\it The weak-type $(1,1)$ of $L\log L$ homogeneous convolution operator},
	Indiana Univ. Math. J. \textbf{48} (1999), no.~4, 1547--1584.
	
\end{thebibliography}
\end{document}